\theoremstyle{plain}
\newtheorem{lem}{Lemma}[section]
\newtheorem{thm}[lem]{Theorem}
\newtheorem{prop}[lem]{Proposition}
\newtheorem{cor}[lem]{Corollary}
\theoremstyle{definition}
\newtheorem{dfn}[lem]{Definition}
\newtheorem{rmk}[lem]{Remark}
\newtheorem{exm}[lem]{Example}
\newcommand{\M}{{\mathcal{M}}}
\newcommand{\X}{{\mathcal{X}}}
\newcommand{\cH}{\mathcal{H}}
\newcommand{\G}{\mathcal{G}}
\newcommand{\cmid}{\,|\,}
\renewcommand{\tilde}{\widetilde}
\newcommand{\lrarr}{\leftrightarrow}
\DeclareMathOperator{\dis}{dis}
\DeclareMathOperator{\pa}{pa}
\DeclareMathOperator{\ch}{ch}
\DeclareMathOperator{\dec}{de}
\DeclareMathOperator{\an}{an}
\DeclareMathOperator{\sib}{sib}
\DeclareMathOperator{\mbl}{mb}
\newcommand\indep{\protect\mathpalette{\protect\independenT}{\perp}}
\def\independenT#1#2{\mathrel{\rlap{$#1#2$}\mkern2mu{#1#2}}}
\title{Dependency in DAG models with Hidden Variables.}
\author[1]{\href{mailto:Robin J. Evans <evans@stats.ox.ac.uk>?Subject=Your UAI 2021 paper}{Robin J.~Evans}{}} 
\affil[1]{%
    Department of Statistics\\
    University of Oxford\\
    Oxford\\
    United Kingdom
}
\begin{document}

\maketitle

\begin{abstract}
Directed acyclic graph models with hidden variables have been much studied, particularly in view of their computational efficiency and connection with causal methods.  In this paper we provide the circumstances under which it is possible for two variables to be identically equal, while all other observed variables stay jointly independent of them and mutually of each other.  We find that this is possible if and only if the two variables are `densely connected'; in other words, if applications of identifiable causal interventions on the graph cannot (non-trivially) separate them.  As a consequence of this, we can also allow such pairs of random variables have any bivariate joint distribution that we choose.  This has implications for model search, since it suggests that we can reduce to only consider graphs in which densely connected vertices are always joined by an edge.
\end{abstract}

\section{Introduction}\label{sec:intro}

Informally, a directed acyclic graph (DAG) is a collection of
vertices (or nodes) joined by directed edges ($\to$) such that there is 
no directed path from a vertex to itself.  DAGs can be used to 
describe \emph{Bayesian Networks} by allowing each vertex to
depend stochastically on its parent nodes.  

We may hypothesize the existence of vertices which we cannot see,
and these are referred to as \emph{hidden} or \emph{latent} nodes
or variables.

\begin{exm} \label{exm:iv}
Consider the instrumental variables model in Figure \ref{fig:iv}(a).  In this 
case we have a DAG over four variables, of which one ($h$) is hidden.  Suppose 
all the variables are binary, and that $H$ (the random variable associated with $h$) is Bernoulli with probability $1/2$; 
we select some value for $X_a$.  Then define each of $X_b$ and $X_c$ to be an
`xor' gate of their two parents; we denote this addition modulo 2 using the symbol 
$\oplus$.  Then we find that:
\begin{align*}
X_b &= X_a \oplus H \\ 
X_c &= X_b \oplus H = (X_a \oplus H) \oplus H = X_a.
\end{align*}
Hence, $X_a = X_c$ with probability 1, but $X_b$ involves $H$, and is therefore
(marginally) independent of $X_a$ and $X_c$.  This is shown in Table 
\ref{tab:truth}, where the marginal distributions of $X_a,X_b$ and of
$X_b,X_c$ are just the Cartesian products $\{0,1\}^2$, but $X_a = X_c$ for 
all values.

\begin{table}
\begin{center}
\begin{tabular}{c|c||c|c}
$H$ & $X_a$ & $X_b$ & $X_c$\\
\hline
0 & 0 & 0 & 0\\
1 & 0 & 1 & 0\\
0 & 1 & 1 & 1\\
1 & 1 & 0 & 1\\
\end{tabular}
\end{center}
\caption{Table showing the possible values for $X_b = X_a \oplus H$ and $X_c = X_b \oplus H$ 
given combinations of $X_a$ and $H$.}
\label{tab:truth}
\end{table}

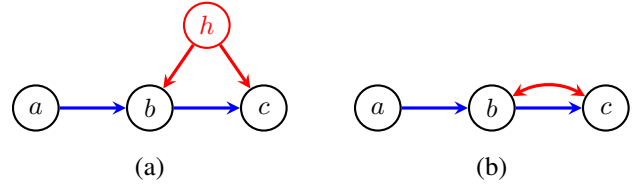
\begin{figure}
 \begin{center}
 \begin{tikzpicture}
[rv/.style={circle, draw, thick, minimum size=6mm, inner sep=0.5mm}, node distance=15mm, >=stealth]
 \pgfsetarrows{latex-latex};
 \begin{scope}
 \node[rv]  (1)              {$a$};
 \node[rv, right of=1] (2) {$b$};
 \node[rv, right of=2] (3) {$c$};
 \node[rv, color=red, above of=3, xshift=-7.5mm,yshift=-4mm] (4) {$h$};
 \draw[->, very thick, color=blue] (1) -- (2);
 \draw[->, very thick, color=blue] (2) -- (3);
 \draw[<-, very thick, color=red] (2) -- (4);
 \draw[<-, very thick, color=red] (3) -- (4);
 \node[below of=2, xshift=0mm, yshift=7mm] {(a)};
 \end{scope}
 \begin{scope}[xshift=4.5cm]
 \node[rv]  (1)              {$a$};
 \node[rv, right of=1] (2) {$b$};
 \node[rv, right of=2] (3) {$c$};
 \draw[->, very thick, color=blue] (1) -- (2);
 \draw[->, very thick, color=blue] (2) -- (3);
 \draw[<->, very thick, color=red] (2) to[bend left] (3);
 \node[below of=2, xshift=0mm, yshift=7mm] {(b)};
 \end{scope}
 \end{tikzpicture}
 \caption{The \emph{instrumental variables} model. 
 (a) A directed acyclic graph on four vertices, one of which is unobserved.  
 (b) The latent projection of this graph to an ADMG over the observed vertices.}
 \label{fig:iv}
 \end{center}
\end{figure}
\end{exm}

This is a surprising result, since $X_a$ and $X_c$ do not share any edge
nor a latent parent, so we might expect such a distribution not to be achievable.  
This can be easily extended to a sequence of bits of arbitrary cardinality, 
just by concatenating the values and applying the arithmetic in a bitwise 
fashion. 
In this paper we will give a generalization of this result to precisely 
characterize when it is possible for two variables to be equal, 
and independent of all other observed variables (which themselves will all
be mutually independent).  We will show that it is 
possible if and only if there is no `nested' 
Markov constraint between the two variables in question. 

\subsection{Previous Work and Contribution}

The question of which distributions can be realized from a 
directed acyclic graph model with hidden variables has been a topic
of considerable interest in recent years.  Its roots lie a century back in 
the structural diagrams of \citet{wright21correlation}, later formalized 
as Bayesian Networks by \citet{pearl86fusion}, and their Markov properties
derived by Dawid, Geiger, Pearl, Lauritzen, Verma and others.  Hidden variables 
were introduced in the 1980's, and in this context Pearl and 
Verma derived the so-called `Verma constraint', a constraint that
is similar in form to, but distinct from, conditional independence; this 
restriction had been noted earlier by \citet{robins86new}.  This was 
later turned into an algorithm by \citet{tian02on} and a \emph{nested Markov} model by 
\citet{richardson:17}, proven algebraically complete in the discrete 
case by \citet{evans:18}.


In this paper we show a different kind of completeness result for
the nested Markov property to that given by \citeauthor{evans:18}: 
approximately, we show that two vertices are 
`densely connected' (see Definition \ref{dfn:dc}), if and only if their 
associated variables can be 
made to be equal to one another, while remaining independent of all other observed 
variables.  In fact, we can always reduce the nested Markov property to 
a \emph{maximal arid graph} (MArG) in the same nested Markov equivalence class.  
This graph has edges precisely between densely
connected vertices in the original graph, and advertises a nested constraint 
between every non-adjacent pair.

There are three reasons why this result is interesting.  First, it gives a 
quick way of checking whether a particular class of distributions is compatible 
with a hidden variable model; this is a topic of considerable interest, 
particularly among computer scientists
\citep[see, e.g.][]{wolfe19inflation, navascues20inflation}.
Second, it reinforces the importance of the nested Markov property, because 
it shows that it precisely characterizes when two variables can be made to 
have an arbitrary joint distribution, independent of all other (observed) variables.  
The rest of the paper is organized as follows.  Finally, it suggests that we
ought to restrict the class of structures that we consider in a causal model
search to maximal arid graphs.  

The rest of the paper is organized as follows.  Section \ref{sec:prelim} gives 
preliminary definitions, Section \ref{sec:nested} discusses the models and
Section \ref{sec:arid} introduces arid graphs. Then Sections \ref{sec:percor} and 
\ref{sec:main} give our main results.  Section \ref{sec:mini} discusses
finding minimal graphs, and Section \ref{sec:exm} gives some worked examples.  
We end with a discussion.
Code to replicate many of the analyses is contained in the R package 
\texttt{dependence} \citep{dependence:21}.

\section{Preliminaries}\label{sec:prelim}

In this paper we consider two classes of graph: directed acyclic graphs (DAGs), and 
acyclic directed mixed graphs (ADMGs).  Directed acyclic graphs have a set of 
vertices $V$, and a collection of directed ($\to$) edges between pairs of distinct 
vertices.  The only condition in these edges is that one cannot follow a path that
coheres with the direction of the edges and end up back where one started. 
An ADMG is a DAG together with a collection of unordered pairs of bidirected 
($\lrarr$) edges.  These (informally) represent hidden variables, and so are used
in circumstances where one does not believe that all variables have been observed. \
Note, therefore, that a DAG is an ADMG, but not the other way around in general. 
Examples of DAGs are given in Figures \ref{fig:iv}(a) and \ref{fig:gadget}(a), and 
of ADMGs in Figures \ref{fig:iv}(b), \ref{fig:gadget}(b), \ref{fig:counter_ex} and 
\ref{fig:counter_exm}.

Any pair of vertices connected
by an edge is said to be \emph{adjacent}.  A pair of vertices may be connected
by either a bidirected edge or a directed edge, or both; the latter configuration
is called a \emph{bow} (see $b$ and $c$ in Figure \ref{fig:iv}(b), 
for example).

A \emph{path} is a sequence of incident edges between adjacent, distinct 
vertices in a graph; note that two distinct paths in a mixed graph may have 
the same vertex set if a bow is present. 
The path is \emph{directed} if all the edges are directed, and oriented to 
point away from the first vertex towards the last. 



\subsection{Familial Definitions}

We will use standard genealogical terminology for relations between vertices.  
Given a vertex $v$ in an ADMG $\G$ with a vertex set $V$, define
the sets of \emph{parents}, 
\emph{children}, 
\emph{ancestors}, 
and \emph{descendants}
of $v$ as
\begin{align*}
\pa_\G(v) &\equiv \{ w : w \to v \text{ in }\G \}\\
\ch_\G(v) &\equiv \{ w : v \to w \text{ in }\G \}\\
\an_\G(v) &\equiv \{ w : w=v \text{ or } w \to \cdots \to v\text{ in }\G \}\\
\dec_\G(v) &\equiv \{ w : w=v \text{ or } v \to \cdots \to w\text{ in }\G \},
\end{align*}
respectively.  
These definitions also apply disjunctively to sets, e.g.\ for a set of 
vertices $C \subseteq V$, $\pa_\G(C) \equiv \bigcup_{v \in C} \pa_\G(v)$.
%
In addition, we define the \emph{district} of $v$ to be the set of vertices
reachable by paths of bidirected edges:
\[
\dis_\G(v) \equiv \{v\} \cup \{ w : w \lrarr \ldots \lrarr v\text{ in }\G \}.
\]
The set of districts of an ADMG $\G$
always partitions the set of vertices $V$.  If the graph is 
unambiguous, we may omit the subscript $\G$: e.g.\ $X_{\pa(v)}$.


Given an ADMG $\G$, and a subset $S$ of vertices $V$ in $\G$, the 
\emph{induced subgraph} $\G_S$ is the graph
with vertex set $S$, and those edges in $\G$ between elements in $S$.  
A set $S \subseteq V$ is called \emph{bidirected-connected} 
in $\G$ if every vertex in $S$ can be reached from every other 
using only a bidirected path that is contained entirely in $S$. 

We sometimes abbreviate (e.g.) $\an_{\G_S}(v)$ and $\dis_{\G_S}(v)$ to 
$\an_S(v)$ and $\dis_S(v)$ where the graph is clear.

\section{Markov Models} \label{sec:nested}

We consider random variables $X_V \equiv (X_v : v \in V)$ taking values in the product space 
$\X_V = \times_{v \in V} \X_v$, for finite dimensional sets $\X_v$.  
For any $A \subseteq V$ we denote the subset $(X_v : v \in A)$ 
by $X_A$.  For a particular value $x_V \in \X_V$ we similarly denote
a subvector over the set $A$ by $x_A$.

We will say that a distribution $p$ is \emph{Markov} with respect to a 
DAG $\G$, if for each $v \in V$, we can write 
\begin{align*}
X_v = f_v(X_{\pa_\G(v)}, E_v)
\end{align*}
for some measurable function $f_v$, and independent noise variables
$E_v$.  That is, each variable depends on its predecessors only through 
the value of its direct parents in the graph.  If the resulting joint 
distribution admits a density $p$, then
this implies the usual factorization:
\begin{align*}
p(x_V) &= \prod_{v \in V} p(x_v \mid x_{\pa(v)}), && x_V \in \X_V.
\end{align*}
This corresponds to a set of conditional independences over the 
space $\X_V$.  See \citet{lauritzen90independence} for more details.

\subsection{Canonical DAGs and Latent Projection}

Given an ADMG $\G$, we define the \emph{canonical DAG}, $\overline{\G}$ 
as the DAG in which each bidirected edge is replaced by a hidden 
variable with exactly two children, being the same vertices that were 
the endpoints of the bidirected edge.  Note that now all bidirected 
edges have been replaced with two directed edges; since the directed 
part of the ADMG is acyclic, then certainly the resulting 
graph is acyclic.  Hence, the canonical DAG is---as its 
name implies---always a DAG.  For example, the DAG in Figure \ref{fig:iv}(a)
is the canonical DAG for the graph in Figure \ref{fig:iv}(b), and 
similarly Figure \ref{fig:gadget}(a) for \ref{fig:gadget}(b).

Given a DAG, we can transform it into an ADMG that captures most of
the causal structure by performing a \emph{latent projection}.  Simple
examples correspond to the pairs of graphs in Figures \ref{fig:iv} and 
\ref{fig:gadget}, but we provide a definition and another example 
in the Appendix, Section \ref{sec:lp}.

Given an ADMG $\G$ with vertices $V$, we define the \emph{marginal 
model} as the set of distributions which can be realized as a margin
over $X_V$ for distributions that are Markov with respect to the 
canonical DAG $\overline{\G}$.  In doing this, we make no assumption 
about the statespace of the hidden variables, though for discrete
models it is sufficient to have the same cardinality as all the 
observed variables, and in general it is both necessary and 
sufficient for the latent variables we use to be continuous. 

We now define sets which are \emph{intrinsic} for a particular 
ADMG.  Take a set $B \subseteq V$, and set $B^{(0)} = V$; 
then alternately apply the operations:
\begin{align*}
B^{(i+1)} = \dis_{B^{(i)}}(B) \qquad B^{(i+2)} = \an_{B^{(i+1)}}(B),
\end{align*}
increasing $i$ at each step. 
Each time we apply these steps either at least one vertex will be removed
from $B^{(i)}$, or else the process will terminate at a superset of $B$, 
which we will denote by $\langle B \rangle_\G$.  This is called the 
\emph{closure} of $B$.
If $\langle B \rangle_\G$ is 
bidirected-connected, then we say it is an \emph{intrinsic} set, 
and the \emph{intrinsic closure} of $B$. 

For convenience we generally abbreviate $\langle \{v\} \rangle_\G$ as 
$\langle v \rangle_\G$, and (for example) $\pa_\G(\langle \{v\} \rangle_\G)$
as $\pa_\G(\langle v \rangle)$.

%


\section{Arid Graphs} \label{sec:arid}

The main result of this section is taken from \citet{shpitser:18}, and says 
that the nested Markov model
associated with \emph{any} ADMG $\G$ can be associated, without
loss of generality, with a closely related \emph{maximal arid graph (MArG)}
$\G^{\dag}$.  In particular, the nested Markov models associated
with $\G$ and $\G^\dag$ are the same.  Note that MArGs are themselves 
just a restricted class of ADMGs.
 

\subsection{Arid Graphs} \label{sec:arid2}

%
%

\begin{dfn} \label{dfn:arid}
%
Let $\G$ be an ADMG.  We say that $\G$ is \emph{arid} if 
$\langle v\rangle_\G = \{v\}$ for each $v \in V$.
\end{dfn}

The word `arid' is used because it implies that the graph 
lacks any (non-trivial) `C-trees' or `converging aborescences'.

\begin{dfn} \label{dfn:dc}
A pair of vertices $a \neq b$ in an ADMG $\G$ is \emph{densely connected} if
either $a \in \pa_\G(\langle b \rangle)$, or
$b \in \pa_\G(\langle a \rangle)$, or
$\langle \{ a, b \} \rangle_\G$ is a bidirected-connected set.

An ADMG $\G$ is called \emph{maximal} if every pair of densely connected 
vertices in $\G$ are adjacent. 
\end{dfn}

Densely connected pairs of vertices form the nested Markov analogue of 
\emph{inducing paths} \citep{verma90equiv}.  The existence of an
inducing path between two vertices means that (almost) no distribution
that is Markov with respect to the graph will have \emph{any} conditional 
independence between the associated variables.  
Analogously, a densely connected pair of vertices means that 
(almost) no distributions that are nested Markov with respect to the graph
will have \emph{any} conditional independences within \emph{any} ADMG 
corresponding to a valid combination of intrinsic sets. 
%
In effect, a densely connected pair cannot be made independent, by any
combination of conditioning and identifiable intervention operations.  

As an example, note that the pair $\{a,c\}$ is densely connected 
in Figure \ref{fig:iv}(b), because $\pa_\G(\langle c\rangle) = \pa_\G(\{b,c\}) = \{a,b\}$. 
Further details are given in the Appendix, Section \ref{sec:nestedmodel}.

\begin{dfn} \label{dfn:aridproj}
Let $\G$ be an ADMG.  We define its 
\emph{maximal arid} projection as 
$\G^{\dag}$, the ADMG 
with edges:
\begin{itemize}
\item $a \to b$ if and only if $a \in \pa_\G(\langle b \rangle)$;
\item $a \lrarr b$ if and only if the set $\langle \{a,b\} \rangle_\G$ is 
bidirected-connected, and both $a \notin \pa_{\G^{\dag}}(b)$ and 
$b \notin \pa_{\G^{\dag}}(a)$; that is, there is no directed edge between 
$a$ and $b$.
\end{itemize}
\end{dfn}

Note that, in particular, all directed edges in $\G$ are preserved 
in $\G^\dag$, though some may be added if they preserve ancestral sets. 
Bidirected edges will be removed if connecting a vertex to something in 
its intrinsic closure, and added if they are required for maximality. 
For example, the MArG projection of the graph in Figure \ref{fig:gadget}(b)
adds a bidirected edge between the vertices 3 and 4.
A further example is found in the Appendix, Section \ref{sec:aridproj}.

%

\begin{prop}[\citealp{shpitser:18}]
The maximal arid projection is arid and maximal.  
\end{prop}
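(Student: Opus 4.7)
The plan is to prove aridity first and then deduce maximality, since once intrinsic closures in $\G^\dag$ are all trivial, dense connection in $\G^\dag$ collapses to adjacency essentially by unwinding definitions.

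For aridity, the target is $\langle v \rangle_{\G^\dag} = \{v\}$ for every $v \in V$. I would argue by contradiction: suppose $C := \langle v \rangle_{\G^\dag}$ contains some $w \neq v$. By definition of the closure, $C$ is bidirected-connected in $\G^\dag_C$ and every vertex of $C$ is an ancestor of $v$ in $\G^\dag_C$. The key step is an edge-by-edge translation back to $\G$: a $\G^\dag$-edge $a \lrarr b$ witnesses that $\langle \{a,b\} \rangle_\G$ is bidirected-connected in $\G$, and a $\G^\dag$-edge $a \to b$ witnesses $a \in \pa_\G(\langle b \rangle_\G)$. Gluing these witnesses along a bidirected path $v = w_0 \lrarr \cdots \lrarr w_k = w$ in $\G^\dag_C$, the union $\bigcup_i \langle \{w_i,w_{i+1}\} \rangle_\G$ is bidirected-connected in $\G$ and contains $\{v,w\}$. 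A short lemma about unions of overlapping intrinsic sets should then let me conclude that $\langle \{v,w\} \rangle_\G$ is itself bidirected-connected. By Definition \ref{dfn:aridproj} this forces an explicit edge between $v$ and $w$ in $\G^\dag$, so $w$ enters $\langle v \rangle_{\G^\dag}$ at the first district step and is never eliminated, enabling an induction on $|C|$ which eventually collapses $C$ to $\{v\}$.

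For maximality, take a densely connected pair $a \neq b$ in $\G^\dag$. By Definition \ref{dfn:dc} and the aridity just established, $\langle a \rangle_{\G^\dag} = \{a\}$ and $\langle b \rangle_{\G^\dag} = \{b\}$, so the first two cases of Definition \ref{dfn:dc} reduce directly to $a \to b$ or $b \to a$ being an edge. In the remaining case, $\langle \{a,b\} \rangle_{\G^\dag}$ is bidirected-connected in $\G^\dag$; applying the same translation procedure used in the aridity argument, I would deduce that $\langle \{a,b\} \rangle_\G$ is bidirected-connected in $\G$, and then Definition \ref{dfn:aridproj} inserts either a directed edge or the bidirected edge $a \lrarr b$ into $\G^\dag$, again giving adjacency.

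The principal obstacle is the gluing step inside the aridity argument: one must show that unions of overlapping intrinsic bidirected-connected sets in $\G$ behave well under the alternating $\dis$/$\an$ closure, and that the resulting set really witnesses $\langle \{v,w\} \rangle_\G$ being bidirected-connected rather than merely being contained in some larger non-bidirected-connected set produced along the way. This combinatorial core likely requires a dedicated lemma about how intrinsic sets interact under intersection and union, and is where I would expect to spend most of the effort.
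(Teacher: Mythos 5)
The paper offers no proof of this proposition at all: it is imported from \citet{shpitser:18}, where it rests on a sequence of composition lemmas for closures. Judged on its own terms, your outline identifies the right decomposition (aridity first, then maximality via unwinding Definition \ref{dfn:dc}), but it contains a genuine logical gap in the aridity step. Supposing $w \in C = \langle v\rangle_{\G^\dag}$ with $w \neq v$ and then deducing that $v$ and $w$ are adjacent in $\G^\dag$ is not a contradiction: adjacency is precisely what \emph{keeps} a vertex inside a closure, and your closing sentence (``$w$ \ldots is never eliminated, enabling an induction \ldots which eventually collapses $C$ to $\{v\}$'') asserts both that $w$ stays and that $C$ shrinks to $\{v\}$. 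The contradiction has to come from the exclusion clause in Definition \ref{dfn:aridproj}: pick $w$ to be a bidirected neighbour of $v$ inside $C$ (one exists because $C$ is a district of $\G^\dag_C$); since $C$ is also a fixed point of the ancestor operation, $w$ has a directed path to $v$ in $\G^\dag_C$, and a composition lemma of the form ``$a \in \pa_\G(\langle b\rangle)$ and $b \in \pa_\G(\langle c\rangle)$ imply $a \in \pa_\G(\langle c\rangle)$'' then yields $w \in \pa_\G(\langle v\rangle_\G) = \pa_{\G^\dag}(v)$, which is exactly what the presence of $v \lrarr w$ in $\G^\dag$ forbids. That lemma is the real content, and you neither state nor prove it.

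The second gap is the one you flag yourself, and it is worse than you suggest: the union $\bigcup_i \langle\{w_i,w_{i+1}\}\rangle_\G$ along a bidirected path of $\G^\dag_C$ consists of vertices that are ancestors of the respective pairs $\{w_i,w_{i+1}\}$ within those sets, not ancestors of $\{v,w\}$, so it need not be contained in $\langle\{v,w\}\rangle_\G$ at all and cannot by itself witness that this closure is bidirected-connected. Making the translation go through requires combining the bidirected witnesses with the directed structure of $\G^\dag_C$ (every $w_i$ is an ancestor of $v$ there) and again the composition lemmas for $\pa_\G(\langle\cdot\rangle)$ and for nested closures ($b \in \langle a\rangle_\G$ implies $\langle b\rangle_\G \subseteq \langle a\rangle_\G$, etc.). The maximality half has the same dependency: the two directed cases do reduce immediately to edges once aridity is known, but the bidirected case needs the same unproved translation. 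In short, the skeleton is reasonable, but the ``combinatorial core'' you defer is essentially the entire proof, and the one concrete deduction you do make in the aridity argument does not close the contradiction.
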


\begin{thm}[\citealp{shpitser:18}]
The nested model associated with an ADMG $\G$ is the same
as the nested model associated with $\G^{\dag}$.
\end{thm}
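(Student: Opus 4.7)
The plan is to show that both graphs give rise to the same set of nested Markov constraints by establishing that the intrinsic sets of $\G$ and of $\G^\dag$ are in natural correspondence, and that corresponding sets have the same parents. Since the nested model is determined by the collection of intrinsic sets together with their parent sets (via the factorization that defines it), equality of these data implies equality of the models.

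First I would handle the singleton case. By Definition \ref{dfn:aridproj}, $a \to b$ in $\G^\dag$ iff $a \in \pa_\G(\langle b \rangle_\G)$; since $\G^\dag$ is arid by the preceding proposition, $\langle v \rangle_{\G^\dag} = \{v\}$, and hence $\pa_{\G^\dag}(\langle v \rangle_{\G^\dag}) = \pa_{\G^\dag}(v) = \pa_\G(\langle v \rangle_\G)$. This already takes care of the singletons in the factorization and provides the base case for the general argument.

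Next I would lift the correspondence to larger intrinsic sets. If $S$ is intrinsic in $\G$, then every pair of vertices in $S$ is densely connected, so by Definition \ref{dfn:dc} every such pair is adjacent in $\G^\dag$. A further argument, using the characterization of $\dag$-edges to rule out (or account for) directed edges within $S$, should show that $S$ is itself intrinsic in $\G^\dag$ and satisfies $\pa_{\G^\dag}(S) = \pa_\G(S)$. Conversely, any intrinsic set $T$ of $\G^\dag$ ought to arise as $\langle T \rangle_\G$ in $\G$, by unfolding the edge definitions of $\G^\dag$ and applying the singleton case vertex by vertex to reconstruct the $\G$-closure with matching parent sets. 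Together these two directions give the required bijection.

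The main obstacle will be showing that the alternating operations $\dis$ and $\an$ that define the intrinsic closure behave coherently across the two graphs. Although $\G^\dag$ typically contains extra directed and bidirected edges beyond those in $\G$, one must verify that these additions neither create spurious intrinsic sets whose factors are not already among the $\G$-constraints, nor destroy any existing ones. The key observation I would try to exploit is that every edge introduced in $\G^\dag$ reflects an ancestral or district relationship already implied by the intrinsic closure in $\G$, so these edges should be informationally redundant at the level of the nested factorization. Once this structural correspondence is in hand, equality of the nested models follows immediately since both are defined from the same set of intrinsic-set-with-parents data.
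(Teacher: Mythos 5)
Your high-level strategy---reducing the claim to showing that the combinatorial data determining the nested model agree for $\G$ and $\G^\dag$---is in the right spirit, and your singleton observation that $\pa_{\G^\dag}(b) = \pa_\G(\langle b\rangle_\G)$ holds by Definition \ref{dfn:aridproj} is correct. (Note that the paper offers no proof of this theorem; it is imported from \citet{shpitser:18}.) However, the correspondence you build on fails at two points. First, the lemma ``if $S$ is intrinsic in $\G$ then every pair of vertices in $S$ is densely connected, hence adjacent in $\G^\dag$'' is false: in the graph $a \lrarr c \lrarr b$ with no directed edges, $\{a,b,c\}$ is intrinsic, yet $\langle\{a,b\}\rangle_\G = \{a,b\}$ is not bidirected-connected and neither vertex is a parent of the other's closure, so $a$ and $b$ are not densely connected and remain non-adjacent in $\G^\dag$ (indeed $\G = \G^\dag$ here). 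Second, and more fundamentally, the intrinsic sets of $\G$ and $\G^\dag$ are \emph{not} in bijection, so no argument establishing such a bijection with matching parents can succeed. The instrumental-variables graph of Figure \ref{fig:iv}(b) already shows this: its intrinsic sets are $\{a\}$, $\{b\}$ and $\{b,c\}$, whereas its maximal arid projection is the complete DAG $a \to b \to c$, $a \to c$, whose intrinsic sets are the three singletons. The two nested models nevertheless coincide (both are saturated); the factorization attached to $\{b,c\}$ in $\G$ is simply vacuous.

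What the cited proof actually shows is preserved is a finer invariant than the collection of intrinsic sets: the (recursive) \emph{heads} together with their \emph{tails}, equivalently the head factorization that characterizes the nested model. In the IV example both graphs yield heads $\{a\},\{b\},\{c\}$ with tails $\emptyset,\{a\},\{a,b\}$, even though $\{c\}$ is intrinsic only in $\G^\dag$ and $\{b,c\}$ only in $\G$. To repair your argument you would need to (i) replace the intrinsic-set bijection by the statement that $\G$ and $\G^\dag$ induce the same heads-with-tails, and (ii) actually prove the ``main obstacle'' you flag---that the alternating $\dis$/$\an$ closure operations interact coherently with the edges added and removed by the projection---rather than assume it; that step is where essentially all of the content of the theorem lies.
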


\begin{cor}
Let $p$ be a distribution that is nested Markov with
respect to $\G$.  Then for $v,w \in V$ the variables $X_v$ and
$X_w$ have a nested constraint between them if 
and only if $v$ and $w$ are not adjacent in the arid projection 
$\G^\dag$. 
\end{cor}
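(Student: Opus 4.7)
The plan is to deduce the corollary quickly from the preceding theorem on equivalence of nested models together with the structural characterization of the maximal arid projection. First, by the theorem, the nested Markov model associated with $\G$ coincides with the one associated with $\G^\dag$, so the family of nested constraints holding between $X_v$ and $X_w$ is identical whether we work in $\G$ or in $\G^\dag$; it therefore suffices to prove the biconditional with $\G^\dag$ in place of $\G$. Next, by the proposition, $\G^\dag$ is maximal, so every densely connected pair is adjacent there; and by Definition \ref{dfn:aridproj}, each edge of $\G^\dag$ records exactly one of the three dense-connection conditions. Hence adjacency in $\G^\dag$ is precisely dense connection in $\G$, and the task reduces to showing that $v$ and $w$ are densely connected if and only if no nested constraint relates $X_v$ and $X_w$ under the nested Markov model of $\G^\dag$.

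For \emph{densely connected implies no nested constraint}, I would invoke the interpretation of dense connection as the nested-Markov analogue of an inducing path spelled out after Definition \ref{dfn:dc}: any legal sequence of fixings in the nested Markov calculus either removes one of $v, w$ from the vertex set entirely, or else keeps them both present and still densely connected in the resulting CADMG, so they remain m-connected given any conditioning set and no conditional independence can be read off. For the converse, \emph{not densely connected implies a nested constraint}, I would use the explicit failure of all three dense-connection conditions to produce a specific intrinsic set $S$ on which, after fixing $V \setminus S$, the two vertices lie in distinct districts of the resulting CADMG and are m-separated by an appropriate conditioning set; this m-separation, pulled back through the fixing operator, is the nested constraint we sought.

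The main obstacle is the first direction: although the combinatorial definition of dense connection is clean, one has to verify that it is preserved under every admissible fixing operation of the nested Markov calculus, and that this preservation genuinely rules out \emph{all} m-separations in every reachable CADMG (not merely those obtainable by a single fixing). Once this invariance is in hand, the corollary follows directly, and in fact the ``no nested constraint'' side is the conclusion that the paper's subsequent dependence construction (making $X_v$ and $X_w$ identically equal while independent of $X_{V \setminus \{v,w\}}$) will certify by exhibiting an explicit model that saturates the constraint-freeness.
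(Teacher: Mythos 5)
Your reduction is exactly the one the paper intends: by the preceding theorem the nested models of $\G$ and $\G^\dag$ coincide, and by Definitions \ref{dfn:dc} and \ref{dfn:aridproj} adjacency in $\G^\dag$ is literally the same condition as dense connection in $\G$, so the corollary reduces to ``densely connected $\Leftrightarrow$ no nested constraint''. The paper, however, does not prove this last equivalence at all --- it imports it wholesale from \citet{shpitser:18}, both here and again in the proof of Theorem \ref{thm:main}. Your attempt to establish it from first principles is where the gap lies, and you have flagged it yourself: the assertion that dense connection is preserved under every admissible fixing, and that this preservation excludes \emph{every} m-separation in \emph{every} reachable CADMG, is precisely the completeness content of the maximal arid graph results of \citet{shpitser:18}; it does not follow from the informal gloss after Definition \ref{dfn:dc}, and as written your first direction restates the claim rather than proving it. Note also that fixing may legitimately be applied to $v$ or $w$ themselves, and that nested constraints are independences in reachable \emph{kernels} rather than in the observed joint, so any invariance argument must be carried out at the level of kernels, not only of graphs.

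There are two acceptable ways to close the gap within this paper's economy: either cite \citet{shpitser:18} for the equivalence, as the paper does; or observe that the ``densely connected $\Rightarrow$ no constraint'' direction is delivered later by Propositions \ref{prop:dir} and \ref{prop:bi}, since the distribution they construct (with $X_v = X_w$ and all other observed variables independent) lies in the marginal, hence nested, model and violates any candidate independence between $X_v$ and $X_w$ in any reachable kernel --- this is the logic the paper itself uses in the proof of Theorem \ref{thm:main}. The converse direction (``not densely connected $\Rightarrow$ some constraint'') genuinely has no proof anywhere in this paper and must be cited.
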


\section{Perfect Correlation} \label{sec:percor}

We now come to the main content of this paper. 

We have already seen
in Example \ref{exm:iv} that it is possible for two vertices which are 
neither joined by an edge, nor share a latent parent, nevertheless to 
be perfectly correlated. 
We now give a slightly more complicated example of this phenomenon.

\begin{exm}
Consider the DAG in Figure \ref{fig:gadget}(a).  Suppose that each hidden variable 
($H_1, H_2, H_3$) is again a Bernoulli random variable with probability $1/2$, and all the 
observed variables are again `xor' gates.  Then:
\begin{align*}
X_a &= H_1 \oplus H_2  & X_b &= H_1 \oplus H_3\\
X_c &= X_b \oplus H_2  & X_d &= X_a \oplus H_3,
\end{align*}
and hence $X_c = X_d = H_1 \oplus H_2 \oplus H_3$.  It follows from 
Lemma \ref{lem:sumtozero2} (in Appendix \ref{sec:proofs})
that if $H_1,H_2,H_3$ are all independent, then so are $X_a,X_b,X_c$.
Naturally, the same result also holds for $X_a,X_b,X_d$. 
\end{exm}

\begin{figure}
 \begin{center}
 \begin{tikzpicture}
[rv/.style={circle, draw, thick, minimum size=6mm, inner sep=0.5mm}, node distance=20mm, >=stealth]
 \pgfsetarrows{latex-latex};
 \large
 \begin{scope}
 \node[rv]  (1)              {$a$};
 \node[rv, right of=1] (2) {$b$};
 \node[rv, below of=1] (3) {$c$};
 \node[rv, right of=3] (4) {$d$};
 \node[rv, right of=1, color=red, xshift=-10mm, yshift=5mm]  (U12)              {\normalsize $h_1$};
 \node[rv, below of=1, color=red, yshift=10mm, xshift=-5mm]  (U13)              {\normalsize $h_2$};
 \node[rv, below of=2, color=red, yshift=10mm, xshift=5mm]  (U24)              {\normalsize $h_3$};
 \draw[->, very thick, color=blue] (1) -- (4);
 \draw[->, very thick, color=blue] (2) -- (3);
 \draw[<-, very thick, color=red] (2) -- (U12);
 \draw[<-, very thick, color=red] (1) -- (U12);
 \draw[->, very thick, color=red] (U13) -- (1);
 \draw[->, very thick, color=red] (U13) -- (3);
 \draw[->, very thick, color=red] (U24) -- (2);
 \draw[->, very thick, color=red] (U24) -- (4);
 \node[below of=3, xshift=10mm, yshift=12mm] {(a)};
 \end{scope}
 \begin{scope}[xshift=4.5cm]
 \node[rv]  (1)              {$a$};
 \node[rv, right of=1] (2) {$b$};
 \node[rv, below of=1] (3) {$c$};
 \node[rv, right of=3] (4) {$d$};
 \draw[->, very thick, color=blue] (1) -- (4);
 \draw[->, very thick, color=blue] (2) -- (3);
 \draw[<->, very thick, color=red] (2) -- (1);
 \draw[<->, very thick, color=red] (1) -- (3);
 \draw[<->, very thick, color=red] (4) -- (2);
 \node[below of=3, xshift=10mm, yshift=12mm] {(b)};
  \end{scope}
 \normalsize
 \end{tikzpicture}
 \caption{(a) A directed acyclic graph on seven vertices, three of which are unobserved.  
 (b) The latent projection of this graph to an ADMG over the observed vertices.}
 \label{fig:gadget}
 \end{center}
\end{figure}
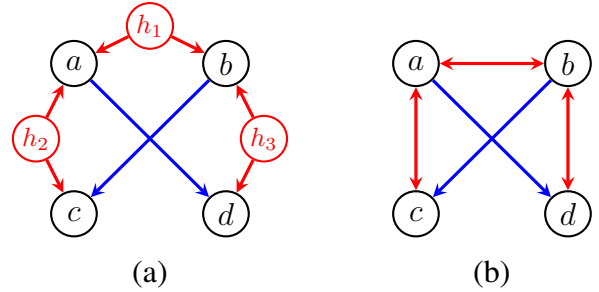

Again, the result seems surprising given the lack of any 
form of adjacency between the vertices $c$ and $d$.

\subsection{Minimal Closures}

In our main result (Theorem \ref{thm:main}) we will claim that it is possible
to have a distribution in the marginal model of an ADMG, with two variables 
identical to one another and independent of all other observable variables if 
and only if they are densely connected.  If these vertices are
adjacent in the ADMG, then the result is obvious; if we have $w \to v$ then the 
value can be directly transmitted, and if $w \lrarr v$ then the latent parent 
in the canonical DAG can just give the same value to both $v$ and 
$w$.  We will show that, in fact, this can be applied much more generally.

\begin{prop} \label{prop:trees}
Let $\G$ be an ADMG, and $B = \{v_1, \ldots, v_k\} \subseteq V$.  Then 
assume that $C := \langle B \rangle_\G$ is a 
bidirected-connected (and hence intrinsic) set. 

Now consider the following edge subgraph of $\G_C$, which we 
call $\tilde\G_{C}$: 
the directed edges only contain a forest over 
$C$ that converges on $B$, and 
bidirected edges only contain a spanning tree over $C$.
Then $\langle B \rangle_\G  = C = \langle B \rangle_{\tilde\G_{C}}$.
%
\end{prop}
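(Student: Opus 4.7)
The plan is to split the claimed equalities: $\langle B \rangle_\G = C$ holds by the very definition of $C$, so the real work is showing $\langle B \rangle_{\tilde\G_C} = C$. I would do this by running the closure iteration on $\tilde\G_C$ directly and checking that it stabilizes at $C$ after a single $\dis$/$\an$ round.

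First I would verify that $\tilde\G_C$ can actually be constructed as described, i.e.\ that $\G_C$ contains both (i) a bidirected spanning tree on $C$ and (ii) a directed forest on $C$ converging on $B$. Item (i) is immediate from the hypothesis that $C$ is bidirected-connected in $\G$, together with the standard existence of spanning trees. For item (ii), the fixed-point characterization of the closure forces $\an_{\G_C}(B) = C$; otherwise the iteration defining $\langle B \rangle_\G$ would shrink $C$ further on its next $\an$ step. Hence every $v \in C \setminus B$ has at least one outgoing directed edge in $\G_C$ lying on a directed path to $B$, and choosing one such edge per non-$B$ vertex gives an acyclic subgraph with out-degree at most one, which is exactly an in-forest rooted at $B$ and spanning $C$.

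Next I would compute the closure of $B$ in $\tilde\G_C$. Its vertex set is $C$, so $B^{(0)} = C$. The bidirected edges of $\tilde\G_C$ span $C$, so every vertex of $C$ is bidirected-connected to $B$ inside $C$, giving $B^{(1)} = \dis_{\tilde\G_C}(B) = C$. The directed edges of $\tilde\G_C$ form an in-forest converging on $B$, so every vertex of $C$ has a directed path to $B$ in $\tilde\G_C$, giving $B^{(2)} = \an_{\tilde\G_C}(B) = C$. The iteration has therefore stabilized, and $\langle B \rangle_{\tilde\G_C} = C$ follows.

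There is no substantive obstacle: once the constructive existence of $\tilde\G_C$ is justified, the closure iteration terminates in a single round essentially by design. The only subtlety worth flagging is conventional: one must view $\tilde\G_C$ as a stand-alone ADMG on the vertex set $C$, so that the iteration begins at $C$ rather than a larger ambient set. The forest/spanning-tree construction is chosen precisely so that this initial set is preserved under the very first applications of $\dis$ and $\an$, which is the heart of the argument.
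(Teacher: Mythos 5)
Your proof is correct and follows essentially the same route as the paper's (very terse) argument: the bidirected spanning tree keeps every vertex of $C$ in the district of $B$, and the converging forest keeps every vertex of $C$ an ancestor of $B$, so the closure iteration in $\tilde\G_C$ stabilizes at $C$ immediately. Your additional verification that the forest actually exists (via the fixed-point property $\an_{\G_C}(B)=C$) is a detail the paper leaves implicit, but it is the same underlying argument.
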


\begin{proof}
Since every element of $\langle B \rangle_\G$ is an ancestor of 
some $v \in B$ in $\tilde\G_C$, and also in the 
same district as everything in $B$, it is clear that the result holds. 
\end{proof}

Note that this loosely defined procedure 
will generally lead to many different graphs, but they will
always have the same total number of edges; that is, there will 
always be $|C| - 1$ bidirected edges, and 
$|C| - |B|$ directed edges.  However, the 
particular choice of the trees will affect which edges can be 
removed by the next result.  In the Appendix (Section \ref{sec:algo}) 
we give some algorithms for formally determining how to
pick the trees in an efficient way.

Suppose we have a graph whose bidirected edges form a spanning tree, and
a set of vertices $D \subseteq V$ that divides into $d$ 
districts.  We say that $D$ 
is \emph{almost encapsulated} in $\G$ if for every bidircted edge 
with an endpoint in $D$, the other endpoint is also in $D$, with 
the exception of exactly $d$ edges, one for each connected component.



\begin{prop} \label{prop:complete}
Consider an ADMG $\cH = \tilde\G_C$ obtained by application of Proposition 
\ref{prop:trees}, and let $B = C \setminus \pa_\cH(C)$ be the vertices
without children.  Suppose that there is a subset of vertices
$A \subseteq C \setminus B$ such that $\an_\cH(A)$ is almost
encapsulated.  Then if we remove $\an_\cH(A)$ from the graph 
to obtain (say) $\tilde{\cH}$ we have 
$\langle B \rangle_{\tilde\cH} = \langle B \rangle_\cH \setminus \an_\cH(A)$.
\end{prop}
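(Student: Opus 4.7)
The plan is to compute $\langle B \rangle_{\tilde\cH}$ directly by running the alternating closure operations on the vertex set of $\tilde\cH$, namely $C \setminus \an_\cH(A)$, and showing that this set is already fixed by both operations. In other words, I aim to verify $\dis_{C \setminus \an_\cH(A)}(B) = C \setminus \an_\cH(A)$ and then $\an_{C \setminus \an_\cH(A)}(B) = C \setminus \an_\cH(A)$, which means the process terminates after one round at $C \setminus \an_\cH(A)$. An easy preliminary is that $B \subseteq C \setminus \an_\cH(A)$: since $A \subseteq C \setminus B$ and vertices of $B$ have no children in $\cH$, no element of $B$ can be an ancestor of any element of $A$.

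The first (and harder) lemma is that $C \setminus \an_\cH(A)$ is bidirected-connected in $\tilde\cH$. Here I would exploit the structure from Proposition \ref{prop:trees}: the bidirected edges of $\cH$ form a spanning tree $T$ on $C$. Because the bidirected subgraph is a tree, the districts of $\an_\cH(A)$ in $\cH$ are exactly the connected components of $T$ restricted to $\an_\cH(A)$, so they are a collection of $d$ disjoint subtrees $T_1,\dots,T_d$. The almost encapsulated hypothesis says each $T_i$ is attached to the rest of $T$ by exactly one edge. A short counting argument then shows that after deleting $\an_\cH(A)$ and the $d$ exit edges, the remaining subgraph has $(|C|-1) - (|\an_\cH(A)| - d) - d = |C \setminus \an_\cH(A)| - 1$ edges. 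Combined with a path-tracing argument in $T$ (any tree-path between two vertices of $C \setminus \an_\cH(A)$ that entered some $T_i$ would have to use its unique exit edge twice, which is impossible), this gives connectivity, so the restriction of $T$ is a spanning tree on $C \setminus \an_\cH(A)$.

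The second lemma is that every $v \in C \setminus \an_\cH(A)$ remains an ancestor of $B$ in $\tilde\cH$. In $\cH$, the directed edges form a forest converging on $B$, so $v$ has a unique directed path $v = v_0 \to v_1 \to \cdots \to v_k \in B$. If some $v_i$ lay in $\an_\cH(A)$, then concatenating $v \to \cdots \to v_i$ with the directed path witnessing $v_i \in \an_\cH(A)$ would place $v$ in $\an_\cH(A)$, contradicting the choice of $v$. So the whole path avoids $\an_\cH(A)$ and survives in $\tilde\cH$. With these two lemmas, both closure operations fix $C \setminus \an_\cH(A)$, and by Proposition \ref{prop:trees} this equals $\langle B \rangle_\cH \setminus \an_\cH(A)$.

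The main obstacle I anticipate is the bidirected-connectedness step. The almost encapsulated hypothesis must be used in exactly the right way: it is tailored so that the $d$ subtrees of $T$ induced by $\an_\cH(A)$ can each be excised along a single edge, preserving tree structure on the complement. One should also take care that the districts being counted are genuinely the tree-components of $T$ on $\an_\cH(A)$, which holds precisely because the bidirected subgraph of $\cH$ was arranged to be a spanning tree in Proposition \ref{prop:trees}; without that, the counting and path-tracing would break.
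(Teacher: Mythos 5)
Your proposal is correct and follows essentially the same route as the paper's (very terse) proof: the two key facts are exactly that the almost encapsulated hypothesis preserves a spanning bidirected tree on $C \setminus \an_\cH(A)$, and that ancestrality of the removed set keeps every surviving vertex an ancestor of $B$. You simply supply the counting and path-tracing details that the paper leaves implicit.
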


\begin{proof}
Since the set we remove is almost encapsulated, this means 
that the remaining vertices still have a spanning tree of 
bidirected edges.  In addition, since the subset is ancestral, 
every remaining vertex is still an ancestor of an element of
$B$.  Hence we have the result given.
\end{proof}

This is a very useful result, since it will enable us to find
a minimal graph which we can apply our results to.  However, finding
that set may be computationally difficult, as the next example 
illustrates.

\begin{exm} \label{exm:mini}
Consider the graph in Figure \ref{fig:counter_ex}, and suppose we wish
to have $X_v = X_w$ and all other variables independent.  This is clearly 
very easy, since there is a directed edge $w \to v$; however, we cannot 
reach this minimal graph all that easily by using Proposition \ref{prop:complete}. 
No set will work except for $\{c,e\}$, and we can make this problem have
exponential complexity very easily (see Section \ref{sec:diff}).
%
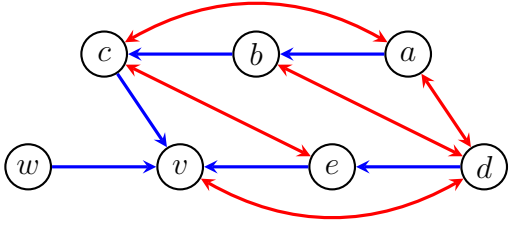
\begin{figure}
 \begin{center}
 \begin{tikzpicture}
[rv/.style={circle, draw, thick, minimum size=6mm, inner sep=0.5mm}, node distance=20mm, >=stealth]
 \pgfsetarrows{latex-latex};
 \large
 \begin{scope}
 \node[rv]  (1)              {$w$};
 \node[rv, right of=1] (2) {$v$};
 \node[rv, right of=2] (7) {$e$};
 \node[rv, right of=7] (6) {$d$};
 \node[rv, above of=1, yshift=-5mm, xshift=10mm] (3) {$c$};
 \node[rv, right of=3] (4) {$b$};
 \node[rv, right of=4] (5) {$a$};
 \draw[->, very thick, color=blue] (5) -- (4);
 \draw[->, very thick, color=blue] (4) -- (3);
 \draw[->, very thick, color=blue] (3) -- (2);
 \draw[->, very thick, color=blue] (1) -- (2);
 \draw[->, very thick, color=blue] (7) -- (2);
 \draw[->, very thick, color=blue] (6) -- (7);
 \draw[<->, very thick, color=red] (3) -- (7);
 \draw[<->, very thick, color=red] (5) -- (6);
 \draw[<->, very thick, color=red] (4) -- (6);
 \draw[<->, very thick, color=red] (5) to[bend right] (3);
 \draw[<->, very thick, color=red] (2) to[bend right] (6);
 \end{scope}
 \normalsize
 \end{tikzpicture}
 \caption{An ADMG which is not minimal w.r.t.\ $(v,w)$ but for which we cannot
 apply Corollary \ref{cor:trees3}.  See Example \ref{exm:mini}.}
 \label{fig:counter_ex}
 \end{center}
\end{figure}

As a result of this problem, we present two much easier to 
apply corollaries.
\end{exm}

\begin{cor} \label{cor:trees3}
Let $\cH = \tilde\G_C$ be an ADMG chosen 
using Proposition \ref{prop:trees}, and let $B = C \setminus \pa_{\cH}(C)$.  
Suppose there is a 
vertex $a$ such that $\ch_{\cH}(a) \neq \emptyset$, and such 
that $\an_{\cH}(a)$ is almost encapsulated within $\cH$.  

Then we can remove $\an_{\cH}(a)$ from the graph to obtain (say)
$\tilde{\cH}$, and still have
$C \setminus \an_{\cH}(a) = \langle B \rangle_{\tilde{\cH}}$.
\end{cor}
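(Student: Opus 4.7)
The plan is to deduce this as the singleton instance of Proposition~\ref{prop:complete}, applied with $A = \{a\}$. To invoke that result I need to verify two hypotheses: that $\{a\} \subseteq C \setminus B$, and that $\an_\cH(\{a\}) = \an_\cH(a)$ is almost encapsulated in $\cH$. The second is granted verbatim, so only the membership condition requires any work.

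For the first hypothesis, I would argue as follows. Since we speak of $\ch_\cH(a)$ and $\an_\cH(a)$, the vertex $a$ lies in the vertex set of $\cH$, which is $C$. The hypothesis $\ch_\cH(a) \neq \emptyset$ says that $a$ has a directed child in $\cH$, and hence $a \in \pa_\cH(C)$. By the definition $B = C \setminus \pa_\cH(C)$ this forces $a \notin B$, so indeed $\{a\} \subseteq C \setminus B$.

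With both hypotheses confirmed, Proposition~\ref{prop:complete} gives $\langle B \rangle_{\tilde\cH} = \langle B \rangle_\cH \setminus \an_\cH(a)$. To put this in the form asserted by the corollary, I would then appeal to Proposition~\ref{prop:trees}, which states that $\langle B \rangle_{\tilde\G_C} = C$; since $\cH = \tilde\G_C$, we get $\langle B \rangle_\cH = C$, and substituting yields $\langle B \rangle_{\tilde\cH} = C \setminus \an_\cH(a)$, as required. There is no substantive obstacle here—the corollary is just the single-vertex specialisation of Proposition~\ref{prop:complete}, with the condition $\ch_\cH(a) \neq \emptyset$ doing nothing more than ensuring $a$ lies outside the leaf set $B$.
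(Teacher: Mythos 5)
Your proposal is correct and matches the paper's own proof, which likewise treats the corollary as the special case of Proposition~\ref{prop:complete} with $A=\{a\}$. You simply make explicit the two details the paper leaves implicit---that $\ch_\cH(a)\neq\emptyset$ forces $a\in\pa_\cH(C)$ and hence $a\notin B$, and that $\langle B\rangle_\cH=C$ by Proposition~\ref{prop:trees}---both of which are verified correctly.
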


\begin{proof}
This is just a special case of Proposition \ref{prop:complete} 
with a single vertex $a$ instead of a set $A$.
\end{proof}


In a graph we say that a vertex is a \emph{leaf} if it only has
a single neighbour. 

\begin{cor} \label{cor:trees2}
Let $\cH = \tilde\G_C$ be an ADMG chosen 
using Proposition \ref{prop:trees}, and let $B = C \setminus \pa_{\cH}(C)$.  
If some $a \in C \setminus B$ is 
a leaf in both the directed and the bidirected trees, then we can remove 
it from the graph and still have 
$C \setminus \{a\} = \langle B \rangle_{\cH_{-a}}$.
\end{cor}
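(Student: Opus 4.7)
The plan is to show that Corollary \ref{cor:trees2} is an immediate specialization of Corollary \ref{cor:trees3}, obtained by taking the distinguished vertex to be $a$ itself. The two things I need to verify are that $\ch_\cH(a)\neq\emptyset$ and that $\an_\cH(a)$ is almost encapsulated in $\cH$.

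First I would observe that because the directed part of $\cH=\tilde\G_C$ is a forest converging on $B$, and because $a\in C\setminus B$ must have at least one child in $\cH$ (by definition of $B = C\setminus\pa_\cH(C)$), the single directed edge incident to $a$ (using the leaf hypothesis for the directed tree) must be outgoing. In particular $a$ has no parents in $\cH$, so $\an_\cH(a)=\{a\}$, and $\ch_\cH(a)$ is nonempty as required.

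Next I would check almost-encapsulation of $\an_\cH(a)=\{a\}$. Taken as an induced subgraph of $\cH$, the singleton $\{a\}$ contains one district (trivially), so $d=1$ in the definition. The leaf hypothesis for the bidirected tree says that $a$ has exactly one bidirected neighbour, and that neighbour lies outside $\{a\}$; thus there is exactly one bidirected edge with an endpoint in $\{a\}$ whose other endpoint lies outside, matching the tolerance of $d=1$ exceptional edges. Hence $\{a\}$ is almost encapsulated.

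With both hypotheses verified, Corollary \ref{cor:trees3} applies and yields $\langle B\rangle_{\cH_{-a}} = \langle B\rangle_\cH \setminus \an_\cH(a) = C\setminus\{a\}$, which is exactly the conclusion. The only potential obstacle is a bookkeeping one: carefully parsing that ``leaf in the directed/bidirected tree'' refers separately to single-neighbour status in each edge-coloured subgraph, and that membership $a\in C\setminus B$ forces the unique directed edge at $a$ to point outward. Both points are immediate once spelled out, so the corollary is really just a convenient packaging of Corollary \ref{cor:trees3}.
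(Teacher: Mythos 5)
Your proposal is correct and follows essentially the same route as the paper, which also derives Corollary \ref{cor:trees2} as the special case of Corollary \ref{cor:trees3} with the single vertex $a$; you simply spell out the verification that $\ch_\cH(a)\neq\emptyset$ and that $\an_\cH(a)=\{a\}$ is almost encapsulated, details the paper leaves implicit.
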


\begin{proof}
This is just a special case of Corollary \ref{cor:trees3} 
with a single vertex acting as the set of ancestors.
\end{proof}

%

The complexity of identifying a minimal set using Proposition 
\ref{prop:complete} motivates us to describe a linear time algorithm
for determining the an inclusion minimal subgraph in Section \ref{sec:mini} 
(see Algorithm \ref{alg:mini1}).



\section{Main Results} \label{sec:main}

In this section we deal with our main concern, which is to show
that if two vertices ($v$ and $w$) are densely connected, then 
we can set 
them to be equal, provided that all the other variables we 
need to set have at least the same cardinality as $X_v$ 
and $X_w$.

\begin{dfn} \label{dfn:Gvw}
Given a graph $\G$ and two vertices $v,w \in V$ that are densely connected, we
define the subgraph $\G^{vw}$ as follows:

\begin{itemize}[leftmargin=*]
\item if $w \in \pa_\G(\langle v\rangle)$ then use the induced
subgraph $\G_{\langle v\rangle \cup \{w\}}$;
\item if $v \in \pa_\G(\langle w\rangle)$ then take $\G_{\langle w\rangle \cup \{v\}}$;
\item if $\langle \{v,w\} \rangle_\G$ is bidirected-connected, then take the 
induced subgraph $\G_{\langle \{v,w\} \rangle}$.
\end{itemize}
\end{dfn}

Note that it is possible that \emph{both} $w \in \pa_\G(\langle v\rangle)$ 
\emph{and} $\langle \{v,w\} \rangle_\G$ is bidirected-connected, in 
which case we have a choice about which graph to select.  Typically 
we would prefer $\G_{\langle v\rangle \cup \{w\}}$, but it may be 
that $\G_{\langle \{v,w\} \rangle}$ enables us to select different 
variables to be used in the final distribution.  Such
greater flexibility could be useful in some circumstances (e.g.\ if
one of the variables used in the directed case does not have 
sufficiently high cardinality).  

\begin{prop} \label{prop:dir}
Let $\G$ be a ADMG with $v,w \in V$ such that
$w \in \pa_\G(\langle v \rangle)$.  Then there is a distribution,
Markov with respect to the canonical DAG $\overline\G$, such that
$X_v = X_w$ and these are independent of all other observed 
variables.  These other variables may be chosen to be 
mutually independent.
\end{prop}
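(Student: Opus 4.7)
My plan is to build an explicit $\mathbb{F}_2$-valued distribution on the canonical DAG $\overline\G$ using an XOR gadget on a carefully pruned subgraph of $\G$, in the typical case $w\notin\langle v\rangle_\G$ (the other case is bidirected-connected and handled by the bidirected construction to follow). Write $C:=\langle v\rangle_\G$ and fix $u^*\in C$ with $w\to u^*$ in $\G$ (which exists because $w\in\pa_\G(C)$). I would apply Proposition~\ref{prop:trees} with $B=\{v\}$ to select an edge subgraph $\tilde\G_C\subseteq\G_C$ whose directed edges form a tree $T_d$ on $C$ converging on $v$ and whose bidirected edges form a spanning tree $T_b$ of $C$. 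In $\overline\G$, let $X_w$ be a Bernoulli$(1/2)$, let the latent $H_e$ attached to each $e\in T_b$ be an independent Bernoulli$(1/2)$, and fix every remaining latent to zero. For each $u\in C$ set
\[
X_u \;=\; \bigoplus_{u'\to u\text{ in }T_d}X_{u'}\;\oplus\;\bigoplus_{\substack{e\in T_b\\ u\in e}}H_e\;\oplus\;\mathbf{1}\{u=u^*\}\,X_w,
\]
and for every observed $u\notin C\cup\{w\}$ take $X_u$ to be independent uniform noise. Because each $X_u$ is a function of its $\overline\G$-parents and independent noise, the distribution is Markov with respect to $\overline\G$.

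Unrolling the recipe along $T_d$ expresses each $X_u$ as the mod-$2$ sum over $T(u):=\an_{T_d}(u)$ of the local contributions. Every $H_e$ appears once for each endpoint of $e$ in $T(u)$; for $u=v$ (with $T(v)=C$) both endpoints of every $e\in T_b$ lie in $T(v)$, so each $H_e$ cancels and only the single $X_w$ contribution at $u^*$ survives, giving $X_v=X_w$. For $u\in C\setminus\{v\}$ the set $T(u)$ is a non-empty proper subset of $C$, so $X_u$ equals the XOR of the $H_e$'s whose edges lie in the $T_b$-cut of $T(u)$, plus $X_w$ whenever $u^*\in T(u)$; connectedness of $T_b$ ensures at least one such $H_e$ appears, hence $X_u$ is uniform and independent of $X_v=X_w$.

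The delicate step is upgrading this to \emph{mutual} independence of $\{X_u:u\in V\setminus\{v,w\}\}$. For $u\notin C$ it is immediate. For $u\in C$ I would invoke Lemma~\ref{lem:sumtozero2}: the $X_u$'s (together with $X_v$) are jointly i.i.d.\ uniform iff the vectors representing them in $\mathbb{F}_2^{T_b}\oplus\mathbb{F}_2\langle X_w\rangle$ are $\mathbb{F}_2$-linearly independent. For any non-empty $S\subseteq C$, the combination $\bigoplus_{u\in S}X_u$ equals $\chi(D_S)\oplus c_S X_w$ with $D_S:=\bigtriangleup_{u\in S\setminus\{v\}}T(u)$. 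In a tree the cut $\chi$ of a vertex-subset vanishes only at $\emptyset$ and the whole vertex set, and $D_S\neq C$ since no $T(u)$ with $u\neq v$ contains $v$. To rule out $D_S=\emptyset$ when $S\setminus\{v\}\neq\emptyset$, I would pick a maximal element $u_0\in S\setminus\{v\}$ under the order $u_1\leq u_2\iff u_1\in T(u_2)$: then $u_0\in T(u_0)$ but $u_0\notin T(u')$ for any other $u'\in S\setminus\{v\}$, so $u_0\in D_S$. The remaining case $S=\{v\}$ contributes the non-zero vector $X_w$. Hence no non-empty combination vanishes, the vectors are linearly independent, and the required mutual independence follows.
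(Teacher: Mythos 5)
Your proof is correct and follows essentially the same route as the paper's: reduce to $\G^{vw}$, prune via Proposition \ref{prop:trees} to a directed tree converging on $v$ and a bidirected spanning tree, and use the XOR construction so that every latent cancels along its two descending paths to $v$ while the single $X_w$ contribution survives. Your mod-2 cut-space argument for joint independence is just a more explicit rendering of the paper's appeal to the spanning-tree structure and Lemma \ref{lem:sumtozero2}, so the approaches coincide.
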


\begin{proof}
We first assume that all variables are binary, and prove this
for a single bit. The extension to multiple bits 
is obvious, provided that all the variables have sufficiently
large cardinality.  We will ignore any vertices other than those in 
$\langle v\rangle_\G \cup \{w\}$, since these are just set to be independent
of all other random variables.  

Reduce the graph to $\G^{vw}$ and then apply 
Proposition \ref{prop:trees}; we can then choose to remove any 
unnecessary variables from $\langle v\rangle_\G$ using Proposition 
\ref{prop:complete} if we want to\footnote{Applying this 
result isn't strictly necessary, but 
it will reduce the number of variables that need to have 
the minimal cardinality.}. 


In this new smaller graph, say $\cH$, consider the canonical
DAG $\overline{\cH}$.  Set all hidden variables 
in $\overline{\cH}$ to be independent Bernoulli r.v.s with 
parameter $1/2$, and select some arbitrary $x_w \in \{0,1\}$.  
Now, let every other observed variable, including $X_v$, be the sum of its parents modulo 2.  
There is a unique directed path that carries the value of $X_w$ 
down to $X_v$ (though, of course, it will generally be encoded with various 
additions).  

Every hidden variable has exactly two children, 
and since these children are both ancestors of $v$ by exactly one 
path, this means that their value will have been cancelled out at
the node where the two paths first meet.  Consequently, the value 
of every bidirected edge will have been removed by the time we
reach $v$, and so $X_v = x_w$ as required.  

Now ignore $X_w$, and consider some subset $D \subseteq C$ 
of the other variables in the intrinsic closure of $v$.  Remember that there is 
a spanning tree
of bidirected edges, so in order to obtain that a set of variables is jointly
dependent we always need \emph{both} the endpoints of any bidirected edge. 
This clearly implies that we need the whole tree, and hence all of $C$; 
but recall that (exactly) one variable
has $X_w$ as a parent, so therefore even the entire set $C$ consists of 
independent random variables.
%
%
%
%
\end{proof}

The next proposition deals with the other way in which two vertices 
can be densely connected.

\begin{prop} \label{prop:bi}
Let $\G$ be an ADMG with $v,w \in V$ such that $\langle \{v,w\} \rangle_\G$ 
is bidirected-connected.  Then there is a distribution, Markov with 
respect to the canonical DAG $\overline\G$, such that $X_v = X_w$ and these 
are independent of all other observed variables.  Again, these variables
may be chosen to be mutually independent.
\end{prop}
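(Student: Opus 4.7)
The plan is to mirror the strategy of Proposition~\ref{prop:dir}, with suitable modifications for the case where $\langle\{v,w\}\rangle_\G$ is itself the bidirected-connected set. First I would reduce to binary variables (larger cardinalities follow bitwise) and apply Proposition~\ref{prop:trees} to pass to a subgraph $\cH = \tilde\G_C$ of $\G_C$, where $C = \langle\{v,w\}\rangle_\G$ carries a bidirected spanning tree $T$ and a directed forest $\F$ converging on $B = \{v,w\}$; Proposition~\ref{prop:complete} may optionally be used to shrink $\cH$ further. On the canonical DAG $\overline{\cH}$, I set each hidden variable $U_e$ (one per bidirected edge $e \in E(T)$) to an independent Bernoulli$(1/2)$, and each observed $X_u$ to the XOR of its parents in $\overline{\cH}$. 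Vertices of $V \setminus C$ are set to independent noise and irrelevant parents in $\overline{\G}$ are ignored in the structural equations, which is permitted by the Markov definition.

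For the equality $X_v = X_w$: let $S_u \subseteq C$ denote the forest-subtree rooted at $u$, i.e.\ the set of vertices $u^*$ such that $u$ lies on the directed path from $u^*$ to its forest-sink $\pi(u^*) \in B$. Unfolding the XOR recursion along $\F$ one obtains
\[
X_u \;=\; \bigoplus_{e \in \delta_T(S_u)} U_e,
\]
where $\delta_T(S_u) \subseteq E(T)$ consists of the bidirected tree edges with exactly one endpoint in $S_u$. Because $\F$ partitions $C$ into its two components $S_v$ and $S_w$, these are complementary in $C$, so $\delta_T(S_v) = \delta_T(S_w)$, giving $X_v = X_w$; this common cut is nonempty since $T$ is a spanning tree on $C$ while $v$ and $w$ lie in different forest components.

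The main obstacle is establishing mutual independence of $\{X_v\}\cup\{X_u : u \in C\setminus\{v,w\}\}$. It suffices to show that the cut vectors $\{\delta_T(S_u) : u \in C\setminus\{w\}\}$ are linearly independent over $\text{GF}(2)^{E(T)}$, since this space has dimension $|C|-1$: the induced linear map from hidden to observed variables would then be a bijection between two copies of $\text{GF}(2)^{|C|-1}$ and push forward the uniform distribution to the uniform distribution. Suppose $D \subseteq C\setminus\{w\}$ satisfies $\bigoplus_{u \in D}\delta_T(S_u) = \delta_T(\bigtriangleup_{u\in D} S_u) = 0$. Connectedness of $T$ forces $R := \bigtriangleup_{u\in D} S_u \in \{\emptyset, C\}$. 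Since $u^* \in S_u$ iff $u$ is a forest-ancestor of $u^*$, and the only forest-ancestor of $w$ is $w$ itself with $w \notin D$, we get $w \notin R$, ruling out $R = C$. In the case $R = \emptyset$, I proceed by induction on depth in $\F$ starting from the roots $v, w$: at each stage, the inductive hypothesis says no strict forest-ancestor of $u^*$ lies in $D$, so the parity condition on $|A_{u^*} \cap D|$ (with $A_{u^*}$ the set of $u^*$'s forest-ancestors including itself) collapses to $u^* \notin D$. Thus $D = \emptyset$.

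Finally, the distribution extends to $\overline{\G}$ by assigning independent noise to all observables outside $C$; Markovness holds because each structural equation uses only a subset of the vertex's $\overline{\G}$-parents, and extension to higher cardinality is bitwise.
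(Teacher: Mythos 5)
Your construction is exactly the paper's: reduce via Proposition~\ref{prop:trees} to a bidirected spanning tree plus a directed forest converging on $\{v,w\}$, make the latents independent $\operatorname{Bernoulli}(1/2)$ and every observed variable the XOR of its parents, and observe that $X_v$ and $X_w$ each equal the parity of the latents on the cut separating the two forest components. The only difference is cosmetic: where the paper delegates the joint independence of $\{X_u : u \in C\setminus\{w\}\}$ to Lemma~\ref{lem:sumtozero2}, you prove it directly by showing the cut vectors $\delta_T(S_u)$ are linearly independent over $\mathrm{GF}(2)$ (which is essentially the invertibility argument inside that lemma's proof), so the proposal is correct and follows the same route.
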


\begin{proof}
First, obtain $\G^{vw}$ from Definition \ref{dfn:Gvw}, and then
find $\cH = \tilde\G^{vw}$ from Proposition \ref{prop:trees}.  
Then again (optionally) choose $\langle \{v,w\} \rangle_\cH$ to be minimal 
by applying Proposition \ref{prop:complete}.  
Then set all latent variables in $\overline\cH$ to be independent Bernoulli
random variables with parameter $1/2$.  

Set all observed variables to be the sum modulo 2 of their parents.  
Now, each latent variable has two children, and by minimality of the 
directed edges, each of 
these is an ancestor of exactly one of $v$ or $w$.  If both are ancestors 
of $v$ (or of $w$), then the value of the latent variable will cancel out at the 
vertex where the two descending paths meet.  Otherwise, 
the latent variable appears in the sums for both $X_v$ and $X_w$.
Hence $X_v = X_w$.  

For the other variables, any subset that (possibly) includes $v$ will 
involve only one end of at least one bidirected edge.  Hence by Lemma 
\ref{lem:sumtozero2} these variables are all independent. 
\end{proof}

Note that if neither of these conditions of Propositions \ref{prop:dir} 
or \ref{prop:bi} hold, then there must be a (nonparametric) nested 
constraint between $X_v$ and $X_w$ \citep{shpitser:18}. 

\begin{thm} \label{thm:main}
We can obtain any joint distribution on $(X_v,X_w)$ 
independently of all other variables if and only if in the 
MArG projection there is an edge between $v$ and $w$. 
\end{thm}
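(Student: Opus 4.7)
The plan is to handle the two directions separately, leaning on the propositions and corollaries already established in Sections~\ref{sec:arid} and~\ref{sec:percor}.

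For the $(\Leftarrow)$ direction, suppose $v$ and $w$ are adjacent in the MArG projection $\G^{\dag}$. By Definition~\ref{dfn:aridproj}, the pair then satisfies one of the three clauses of Definition~\ref{dfn:dc}; in other words, $v$ and $w$ are densely connected in $\G$. In the two directed cases I would apply Proposition~\ref{prop:dir}, and in the bidirected-connected case Proposition~\ref{prop:bi}: each produces a distribution in the marginal model of $\G$ in which $X_v = X_w$ while the other observed variables are mutually independent and independent of $(X_v, X_w)$. To upgrade equality to an arbitrary target joint $\pi$ on $(X_v, X_w)$, I would keep the proposition's construction in place and modify only the structural equations at the sinks of the subgraph $\cH$ of Proposition~\ref{prop:trees}: at $v$ alone in the directed case, since there $w$ is a free source whose marginal can be set to $\pi_w$ directly, and at both $v$ and $w$ in the bidirected case. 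The modification replaces $f_v$ (and, where needed, $f_w$) by a function of the common value $U := X_v = X_w$ and an independent fresh noise; by standard noise-outsourcing, and after running the XOR construction in $\{0,1\}^k$ under bitwise addition so that $U$ lives in a sufficiently large state space, one can realise any joint $\pi$. Because $v$ (and $w$, where applicable) is a sink of $\cH$ and the observed variables outside $\cH$ have already been decoupled by the proposition, this modification does not disturb any independence.

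For the $(\Rightarrow)$ direction, suppose instead $v$ and $w$ are not adjacent in $\G^{\dag}$. Then by Definition~\ref{dfn:aridproj} they fail every clause of Definition~\ref{dfn:dc} and are not densely connected in $\G$. The corollary to the nested equivalence theorem of \citet{shpitser:18} then yields a nontrivial nested Markov constraint involving $X_v$ and $X_w$ that must be satisfied by every distribution in the marginal model of $\G$. I would finish by observing that, once the remaining observed variables are set to be mutually independent of one another and of $(X_v, X_w)$, the fixing and conditioning operations in the constraint act on variables that are trivially independent of $(X_v, X_w)$ and so reduce to ordinary marginalisation. The constraint then descends to an unconditional restriction on the bivariate law of $(X_v, X_w)$, which forbids a generic $\pi$ and establishes the contrapositive.

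The main obstacle is this last step of the $(\Rightarrow)$ direction: one has to check that the fixing operations of the nested Markov algorithm really do act as plain marginalisation when their targets are independent of $(X_v, X_w)$, so that the constraint propagates cleanly down to the bivariate marginal. The $(\Leftarrow)$ direction is largely bookkeeping on top of Propositions~\ref{prop:dir} and~\ref{prop:bi}; the only subtlety is checking that altering the structural equations at the sinks of $\cH$ preserves mutual independence of the other observed variables, which follows because no other observed equation reads $X_v$ or $X_w$ as a functional input.
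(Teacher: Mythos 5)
Your proposal is correct and follows essentially the same route as the paper: the $(\Leftarrow)$ direction rests on Propositions~\ref{prop:dir} and~\ref{prop:bi} upgraded to an arbitrary joint law by noise outsourcing at the sink(s) (exactly the device the paper uses in Section~\ref{sec:arb}, citing \citet{chentsov:82}), and the $(\Rightarrow)$ direction invokes the nested constraint guaranteed by \citet{shpitser:18} for non-adjacent pairs in $\G^\dag$, which collapses to a marginal independence of $X_v$ and $X_w$ once the remaining variables are mutually independent. Your extra care in checking that fixing reduces to marginalisation under independence is a sound elaboration of what the paper asserts in one line.
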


\begin{proof}
This follows from the two propositions above and the fact
that if there is no edge in the MArG projection, then there
is a nested constraint between $X_v$ and $X_w$ \citep{shpitser:18}. 
This amounts to a marginal independence constraint when other 
variables are mutually independent, so it is clearly incompatible 
with the distribution described.
\end{proof}

\begin{rmk}
We can easily extend the result to arbitrary discrete random variables
by using essentially the same method; assume that all the variables have 
exactly $k$ states, and set bidirected edges to be uniformly sampled from 
$\{0,1,\ldots, k-1\}$.  If both ends of the bidirected edge are ancestors 
of the same vertex (e.g.\ $v$) then one end of the bidirected edge must 
subtract its value (this way that quantity will still cancel out when the
two paths meet again).  If the 
edges are ancestors of distinct vertices then both children must 
add the number to their total. 
\end{rmk}

\subsection{Arbitrary Continuous Distributions} \label{sec:arb}

As we have already discussed, it is also easy to extend the result to 
continuous random variables by using bitwise operations; suppose that
the equation to determine $X_v$ is 
$X_v = \bigoplus_{i \in \pa_{\overline{\cal H}}(v)}Y_{i}$; 
here $Y_i$ can represent either an observed or a latent variable, and 
we have just proven that the right hand side is the same as $X_w$.  
To obtain an arbitrary joint distribution we can just replace the 
structural equation with $X_v = f(X_w, U)$ for an 
independent uniform $(0,1)$ random variable $U$, and a suitable measurable
function $f$ \citep[][Theorem 2.2]{chentsov:82}.
We illustrate this with the scatter plots in Figure \ref{fig:scatter},
which shows data generated from the IV model in this manner, with the 
marginal distributions set to be standard normal and the correlation 
between $X_a$ and $X_c$ set to be 0.9.

\begin{figure}
\includegraphics[width=\columnwidth, height=.9\columnwidth]{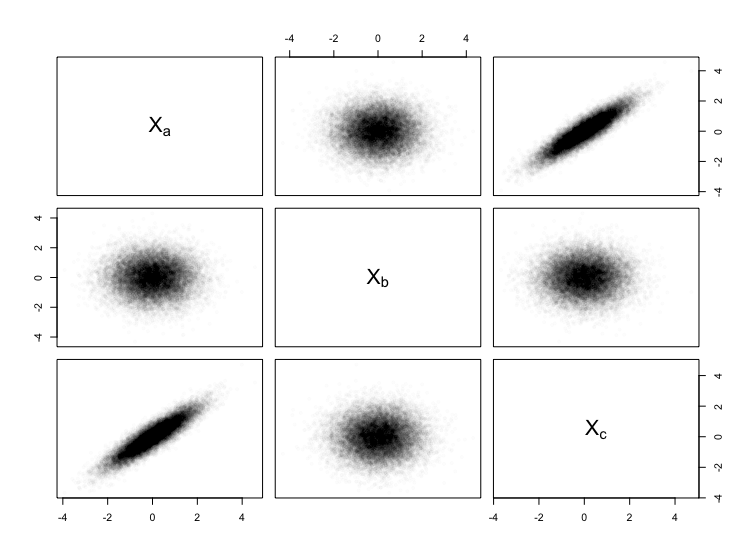}
\caption{Scatter plot showing normal random variables 
simulated from the hidden variable DAG in Figure \ref{fig:iv}.}
\label{fig:scatter}
\end{figure}

\section{Algorithms for Minimality} \label{sec:mini}

If we have applied Proposition \ref{prop:complete} as much as 
possible, then the graph we end up with is minimal.  However, the
result is not computationally efficient to use, since (in principal)
we may have to try every ancestral subset of $C \setminus B$.  In Algorithm 
\ref{alg:mini1} we give a fast algorithm 
to obtain a minimal set, by building it up from the two vertices $v,w$.  
In particular, the algorithm has worst-case linear complexity in 
the number of vertices. 

Recall that the bidirected edges in a graph output from Proposition
\ref{prop:trees} are in the form of a tree, so there is a unique path 
between any two vertices.  We denote by $\dis^v(A)$ the set of vertices 
on the unique bidirected paths from each $a \in A$ to $v$ 
(inclusive of $A$ and $v$).

\begin{algorithm} 
\SetAlgoLined
\KwIn{vertices $v,w$, graph $\G$ from Proposition \ref{prop:trees}}
\KwResult{A minimal set $W$.}
\eIf{$w \in \dis_\G(v)$}{
initialize $A = \{v,w\}$\;
pick $a \lrarr b$ such that $a \in \an_\G(v)$ and $b \in\an_\G(w)$\;
initialize $W = \{a,b\} \cup \dis^v(a) \cup \dis^w(b)$\;
}{
initialize $W = A = \{v,w\}$\;
}
\While{$A \neq \emptyset$}{
    $A := \dec(W) \setminus W$\;
	$A := (A \cup \dis^{v}(A)) \setminus W$\;
	$W := W \dot\cup A$\;
}
\Return{$W$}
\caption{Find minimal set $W$ that allows $X_v$ and $X_w$ to be perfectly correlated.}
\label{alg:mini1}
\end{algorithm}

  \begin{figure*}
  \begin{center}
 \large
  \begin{tikzpicture}[rv/.style={circle, draw, thick, minimum size=6mm, inner sep=0.5mm}, node distance=17.5mm, >=stealth]
  \pgfsetarrows{latex-latex};
  \begin{scope}
  \node[rv] (w) {$w$};
  \node[rv, above of=w, yshift=-2mm] (b) {$b$};
  \node[rv, right of=w, xshift=-2mm] (c) {$c$};
  \node[rv, below of=w, yshift=2mm] (v) {$v$};
  \node[rv, left of=w] (a) {$a$};
  \node[rv, right of=v] (d) {$d$};
  \draw[<->, very thick, color=red] (b) to[bend right=20] (c);
  \draw[<->, very thick, color=red] (b) to[bend left=60] (d);
    \draw[<->, very thick, color=red] (c) to (d);
  \draw[<->, very thick, color=red] (b) to (a);
    \draw[<->, very thick, color=red] (a) to[bend right] (v);
  \draw[->, very thick, color=blue] (a) -- (v);
  \draw[->, very thick, color=blue] (c) -- (v);
  \draw[->, very thick, color=blue] (d) -- (v);
  \draw[->, very thick, color=blue] (a) -- (w);
  \draw[->, very thick, color=blue] (b) -- (c);
  \draw[->, very thick, color=blue] (w) -- (c);
    \node[below of=v, yshift=7.5mm] {(a)};
    \end{scope}
      \begin{scope}[xshift=5.5cm]
  \node[rv] (w) {$w$};
  \node[rv, above of=w, yshift=-2mm] (b) {$b$};
  \node[rv, right of=w, xshift=-2mm] (c) {$c$};
  \node[rv, below of=w, yshift=2mm] (v) {$v$};
  \node[rv, left of=w] (a) {$a$};
  \node[rv, right of=v] (d) {$d$};
  \draw[<->, very thick, color=red] (c) to (d);
  \draw[<->, very thick, color=red] (b) to (a);
    \draw[<->, very thick, color=red] (a) to[bend left=20] (c);
  \draw[<->, very thick, color=red] (b) to[bend left=60] (d);

  \draw[->, very thick, color=blue] (a) -- (v);
  \draw[->, very thick, color=blue] (b) to[bend right] (v);
  \draw[->, very thick, color=blue] (c) -- (v);
  \draw[->, very thick, color=blue] (d) -- (v);
  \draw[->, very thick, color=blue] (a) -- (w);
  \draw[->, very thick, color=blue] (b) -- (c);
  \draw[->, very thick, color=blue] (w) -- (c);
  \draw[->, very thick, color=blue] (w) -- (v);
    \node[below of=v, yshift=7.5mm] {(b)};
    \end{scope}
          \begin{scope}[xshift=11cm]
  \node[rv] (w) {$w$};
  \node[rv, above of=w, yshift=-2mm] (b) {$b$};
  \node[rv, right of=w, xshift=-2mm] (c) {$c$};
  \node[rv, below of=w, yshift=2mm] (v) {$v$};
  \node[rv, right of=v] (d) {$d$};
  \node[rv, left of=w] (a) {$a$};
    \node[rv, red, below of=a, xshift=2mm, yshift=2mm] (U2) {\normalsize $h_2$};
    \node[rv, red, above of=a, xshift=2mm, yshift=-2mm] (U1) {\normalsize $h_1$};
    \node[rv, red, below of=c, xshift=1cm, yshift=8.75mm] (U4) {\normalsize $h_4$};
    \node[rv, red, above of=c, xshift=-4mm, yshift=-4mm] (U3) {\normalsize $h_3$};
    \node[rv, red, right of=b, xshift=8mm, yshift=2mm] (U5) {\normalsize $h_5$};
  \draw[->, very thick, color=red] (U2) -- (a);
  \draw[->, very thick, color=red] (U2) -- (v);
  \draw[->, very thick, color=red] (U1) -- (a);
  \draw[->, very thick, color=red] (U1) -- (b);
  \draw[->, very thick, color=red] (U3) -- (b);
  \draw[->, very thick, color=red] (U3) -- (c);
  \draw[->, very thick, color=red] (U4) -- (d);
  \draw[->, very thick, color=red] (U4) -- (c);
  \draw[->, very thick, color=red] (U5) -- (d);
  \draw[->, very thick, color=red] (U5) to[bend right=10] (b);
  \draw[->, very thick, color=blue] (a) -- (v);
  \draw[->, very thick, color=blue] (c) -- (v);
  \draw[->, very thick, color=blue] (d) -- (v);
  \draw[->, very thick, color=blue] (a) -- (w);
  \draw[->, very thick, color=blue] (b) -- (c);
  \draw[->, very thick, color=blue] (w) -- (c);
  \node[below of=v, yshift=7.5mm] {(c)};
    \end{scope}
              \begin{scope}[xshift=0cm, yshift=-5cm]
  \node[rv] (w) {$w$};
  \node[rv, above of=w, yshift=-4mm] (b) {$b$};
  \node[rv, right of=w] (c) {$c$};
  \node[rv, below of=w, yshift=4mm] (v) {$v$};
  \node[rv, left of=w] (a) {$a$};
  \node[rv, right of=v] (d) {$d$};
  \draw[<->, very thick, color=red] (b) to[bend left] (c);
    \draw[<->, very thick, color=red] (c) to (d);
  \draw[<->, very thick, color=red] (b) to (a);
    \draw[<->, very thick, color=red] (a) to[bend right] (v);
  \draw[->, very thick, color=blue] (a) -- (v);
  \draw[->, very thick, color=blue] (c) -- (v);
  \draw[->, very thick, color=blue] (d) -- (v);
  \draw[->, very thick, color=blue] (b) -- (c);
  \draw[->, very thick, color=blue] (w) -- (c);
  \node[below of=v, yshift=7.5mm] {(d)};
    \end{scope}
                  \begin{scope}[xshift=5.5cm, yshift=-5cm]
  \node[rv] (w) {$w$};
  \node[rv, above of=w, yshift=-4mm] (b) {$b$};
  \node[rv, right of=w] (c) {$c$};
  \node[rv, below of=w, yshift=4mm] (v) {$v$};
  \node[rv, left of=w] (a) {$a$};
  \draw[<->, very thick, color=red] (b) to[bend left] (c);
  \draw[<->, very thick, color=red] (b) to (a);
    \draw[<->, very thick, color=red] (a) to[bend right] (v);
  \draw[->, very thick, color=blue] (a) -- (v);
  \draw[->, very thick, color=blue] (c) -- (v);
  \draw[->, very thick, color=blue] (b) -- (c);
  \draw[->, very thick, color=blue] (w) -- (c);
  \node[below of=v, yshift=7.5mm] {(e)};
    \end{scope}
                     \begin{scope}[xshift=11cm, yshift=-5cm]
   \node[rv] (w) {$w$};
  \node[rv, above of=w, yshift=-4mm] (b) {$b$};
  \node[rv, right of=w] (c) {$c$};
  \node[rv, below of=w, yshift=4mm] (v) {$v$};
  \node[rv, left of=w] (a) {$a$};
    \node[rv, red, below of=a, xshift=4mm, yshift=4mm] (U2) {\normalsize $h_2$};
    \node[rv, red, above of=a, xshift=4mm, yshift=-4mm] (U1) {\normalsize $h_1$};
    \node[rv, red, above of=c, xshift=-4mm, yshift=-4mm] (U3) {\normalsize $h_3$};
  \draw[->, very thick, color=red] (U2) -- (a);
  \draw[->, very thick, color=red] (U2) -- (v);
  \draw[->, very thick, color=red] (U1) -- (a);
  \draw[->, very thick, color=red] (U1) -- (b);
  \draw[->, very thick, color=red] (U3) -- (b);
  \draw[->, very thick, color=red] (U3) -- (c);
  \draw[->, very thick, color=blue] (a) -- (v);
  \draw[->, very thick, color=blue] (c) -- (v);
  \draw[->, very thick, color=blue] (b) -- (c);
  \draw[->, very thick, color=blue] (w) -- (c);
  \node[below of=v, yshift=7.5mm] {(f)};
    \end{scope}

  \end{tikzpicture}
 \normalsize
  \caption{An illustration of the construction in Proposition \ref{prop:dir}, for the vertices $v$ and $w$.  
  (a) An ADMG $\G$, (b) its MArG projection $\G^\dag$, and (c) its canonical DAG $\overline\G$.  In (d) we reduce
  the directed and bidirected edges from (a) to spanning trees, as well as removing 
  incoming edges to $z$; 
  in (e) we remove the vertex $d$ using Corollary \ref{cor:trees3}, and in (f) we give the 
  canonical DAG for the graph in (e).}
  \label{fig:exm0}
  \end{center}
  \end{figure*}
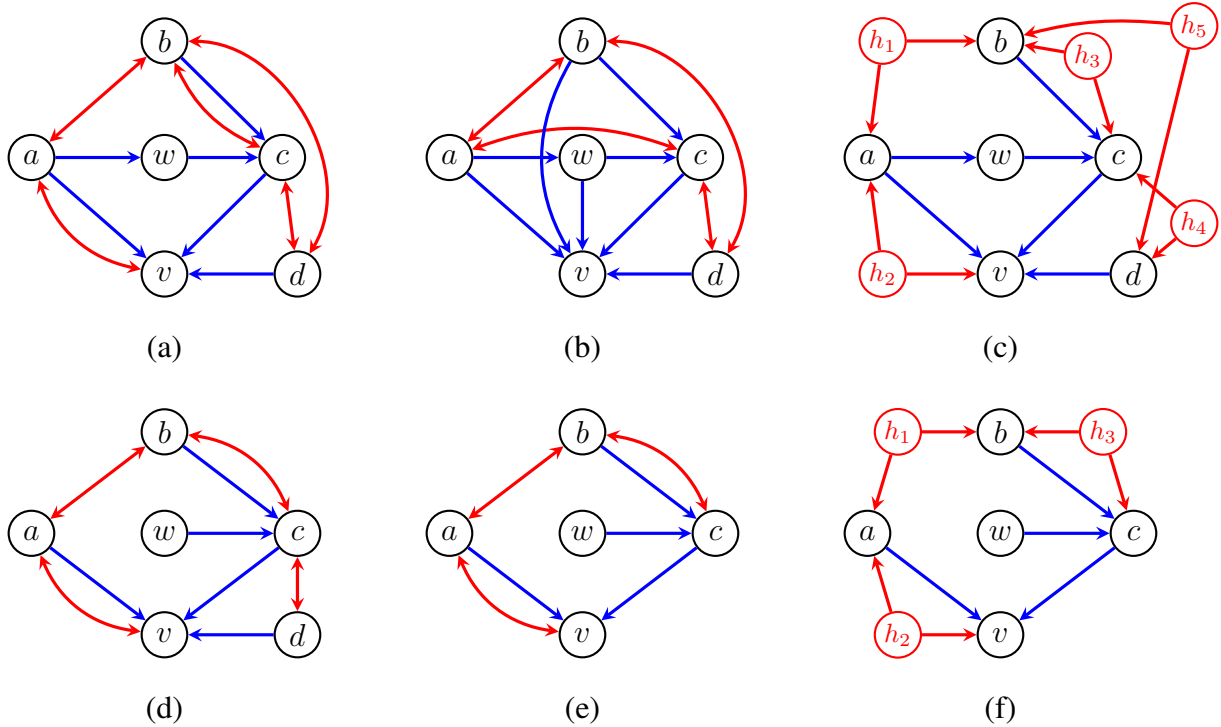

  \begin{figure*}
  \begin{center}
 \large
  \begin{tikzpicture}[rv/.style={circle, draw, thick, minimum size=6mm, inner sep=0.5mm}, node distance=17.5mm, >=stealth]
  \pgfsetarrows{latex-latex};
  \begin{scope}
  \node[rv] (v) {$v$};
  \node[rv, above of=v, yshift=-4mm] (1) {$a$};
  \node[rv, right of=v] (2) {$b$};
  \node[rv, below of=v, yshift=4mm] (w) {$w$};
  \node[rv, left of=v] (3) {$c$};
  \draw[<->, very thick, color=red] (3) -- (w);
  \draw[<->, very thick, color=red] (1) -- (2);
  \draw[<->, very thick, color=red] (v) to (2);
  \draw[<->, very thick, color=red] (1) to[bend right=30] (w);
    \draw[<->, very thick, color=red] (2) to[bend right=-30] (w);
  \draw[->, very thick, color=blue] (1) -- (v);
  \draw[->, very thick, color=blue] (3) -- (v);
  \draw[->, very thick, color=blue] (2) -- (w);
    \node[below of=w, yshift=7.5mm] {(a)};
    \end{scope}
      \begin{scope}[xshift=5cm]
  \node[rv] (v) {$v$};
  \node[rv, above of=v, yshift=-4mm] (1) {$a$};
  \node[rv, right of=v] (2) {$b$};
  \node[rv, below of=v, yshift=4mm] (w) {$w$};
  \node[rv, left of=v] (3) {$c$};
  \draw[<->, very thick, color=red] (3) -- (w);
  \draw[<->, very thick, color=red] (1) -- (2);
  \draw[<->, very thick, color=red] (v) to (2);
  \draw[<->, very thick, color=red] (1) to[bend right=30] (w);
  \draw[->, very thick, color=blue] (1) -- (v);
  \draw[->, very thick, color=blue] (3) -- (v);
  \draw[->, very thick, color=blue] (2) -- (w);
  \node[below of=w, yshift=7.5mm] {(b)};
    \end{scope}
      \begin{scope}[xshift=10cm, yshift=0cm]
   \node[rv] (v) {$v$};
  \node[rv, above of=v, yshift=-4mm] (a) {$a$};
  \node[rv, right of=v] (b) {$b$};
  \node[rv, below of=v, yshift=4mm] (w) {$w$};
  \node[rv, red, left of=v, xshift=4mm] (U2) {\normalsize $h_2$};
    \node[rv, red, right of=v, xshift=-8.75mm, yshift=-2mm] (U3) {\normalsize $h_3$};
    \node[rv, red, above of=b, xshift=-4mm, yshift=-4mm] (U1) {\normalsize $h_1$};
  \draw[->, very thick, color=red] (U2) -- (a);
  \draw[->, very thick, color=red] (U2) -- (w);
  \draw[->, very thick, color=red] (U1) -- (a);
  \draw[->, very thick, color=red] (U1) -- (b);
  \draw[->, very thick, color=red] (U3) -- (b);
  \draw[->, very thick, color=red] (U3) -- (v);
  \draw[->, very thick, color=blue] (a) -- (v);
  \draw[->, very thick, color=blue] (b) -- (w);
%
  \node[below of=w, yshift=7.5mm] {(c)};
    \end{scope}
  \end{tikzpicture}
 \normalsize
  \caption{An illustration of the construction in Proposition \ref{prop:bi}, for the vertices $v$ and $w$.  
  (a) An ADMG; (b) here we reduce the bidirected edges 
  to a spanning tree; (c) we apply Corollary \ref{cor:trees2} to remove $c$ and take
  the canonical DAG.}
  \label{fig:exm1}
  \end{center}
  \end{figure*}
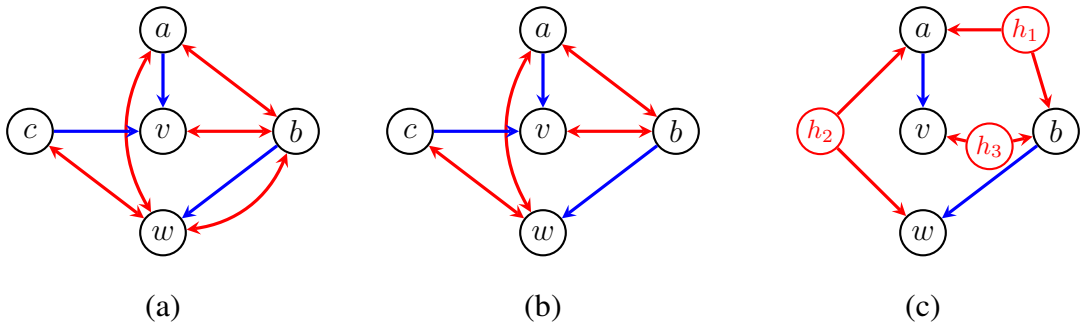

  \begin{figure*}
  \begin{center}
 \large
  \begin{tikzpicture}[rv/.style={circle, draw, thick, minimum size=6mm, inner sep=.5mm}, node distance=18mm, >=stealth]
  \pgfsetarrows{latex-latex};
  \begin{scope}
  \node[rv] (1) {$a$};
  \node[rv, right of=1, xshift=-3mm] (2) {$b$};
  \node[rv, above right of=2, yshift=0mm] (3) {$c$};
  \node[rv, below right of=2, yshift=0mm] (4) {$d$};
  \draw[<->, very thick, color=red] (3) to[bend right=30] (2);
  \draw[<->, very thick, color=red] (2) to[bend right=30] (4);
  \draw[->, very thick, color=blue] (1) -- (2);
  \draw[->, very thick, color=blue] (2) -- (3);
  \draw[->, very thick, color=blue] (2) -- (4);
    \node[below of=2, yshift=5mm, xshift=-10mm] {(a)};
    \end{scope}
      \begin{scope}[xshift=5.5cm]
  \node[rv] (1) {$a$};
  \node[rv, right of=1, xshift=-3mm] (2) {$b$};
  \node[rv, above right of=2, yshift=0mm] (3) {$c$};
  \node[rv, below right of=2, yshift=0mm] (4) {$d$};
  \draw[<->, very thick, color=red] (3) -- (4);
  \draw[->, very thick, color=blue] (1) -- (3);
  \draw[->, very thick, color=blue] (1) -- (4);
  \draw[->, very thick, color=blue] (2) -- (3);
  \draw[->, very thick, color=blue] (1) -- (2);
  \draw[->, very thick, color=blue] (2) -- (3);
  \draw[->, very thick, color=blue] (2) -- (4);
    \node[below of=2, yshift=5mm, xshift=-10mm] {(b)};
    \end{scope}
  \end{tikzpicture}
 \normalsize
  \caption{(a) A graph in which it is not possible to have $X_a = X_c = X_d$ 
  almost surely and independent of $X_b$, even though this clearly is possible 
  in its arid projection (b).}
  \label{fig:counter_exm}
  \end{center}
  \end{figure*}
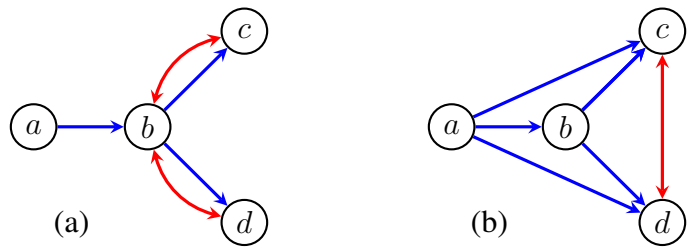

\begin{prop}
The complexity of Algorithm \ref{alg:mini1}  is $O(|V|)$, where
$V$ is the vertex set of $\G$.
\end{prop}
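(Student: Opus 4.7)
The plan is to show that both the number of loop iterations and the total work across all iterations are $O(|V|)$, by exploiting the tree/forest structure of $\G$ together with an amortized analysis in which every vertex and every edge is "charged" only a constant number of times.

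First I would observe structural facts about the input graph: by Proposition \ref{prop:trees}, the bidirected edges of $\G$ form a spanning tree on $C$ and the directed edges form a forest converging on $B$, so $|E(\G)| = O(|V|)$, and in particular each vertex has at most one outgoing directed edge. This lets me precompute, in a single $O(|V|)$ pass, (i) directed-parent pointers in the converging forest and (ii) bidirected-parent pointers after rooting the bidirected spanning tree at $v$. The initialization block in the case $w \in \dis_\G(v)$ reduces to finding any bidirected edge crossing the boundary between $\an_\G(v)$ and $\an_\G(w)$, which needs just one scan and then two root-ward walks in the bidirected tree; both cost $O(|V|)$.

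Next I would analyse the main loop. The set $W$ only grows and is contained in $V$, so the loop runs at most $|V|$ times and the total number of vertices ever added is at most $|V|$. The step $A := \dec(W)\setminus W$ can be performed by, for each newly added vertex $u$, walking forward along its unique directed out-path (using the precomputed parent pointers) until hitting a vertex that is already in $W$; once this happens, the walk stops, and subsequent iterations never traverse that portion of the path again. Similarly, $A := (A \cup \dis^v(A))\setminus W$ is computed by walking each new $a \in A$ toward $v$ in the bidirected tree until a vertex of $W$ is encountered. Each directed and each bidirected edge is therefore traversed at most a constant number of times across the whole run, giving $O(|V|)$ total work.

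The main obstacle is making the amortization rigorous: it must be argued that, even though $\dec(W)$ and $\dis^v(A)$ are stated as set-valued operations on \emph{all} of $W$, they can be implemented incrementally so that work done in each iteration is charged only against vertices newly added to $W$ in that iteration. The crucial observation is monotonicity of $W$ together with the fact that both traversals happen on tree-like structures with unique paths: once a vertex $u$ lies in $W$, any future walk in either the directed forest or the bidirected tree that reaches $u$ stops immediately, so the edge used to enter $u$ is never traversed again. Standard implementation using a boolean "in-$W$" flag and the precomputed parent pointers then yields the bound $O(|V|)$ on total running time.
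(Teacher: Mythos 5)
Your proof is correct and follows essentially the same route as the paper's: both rely on the fact that the two spanning structures give only $O(|V|)$ edges, that the initialization (including finding the crossing bidirected edge) is a linear scan, and that the loop's traversals never revisit vertices already in $W$, so each edge is charged a constant number of times. Your version simply makes the amortization explicit via parent pointers and an in-$W$ flag, which the paper's proof states more informally as ``we ignore any vertices that we have seen before.''
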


\begin{proof}
Note that, since the graph consists of two trees, there are only
$2(|V| - 1)$ edges in total.  
Suppose we have the children and spouses of each vertex (otherwise
we may need to run an $O(|V|)$ algorithm to obtain these, e.g.\ from 
the parent sets). 
Finding the district in a particular direction may involve rearranging
the spouses so that the one towards $v$ (or $w$) is listed first.  Again,
this can be done in $O(|V|)$ time.

If the `if' condition is satisfied, then we must find a bidirected edge 
that is an ancestor of both $v$ and $w$.  These two sets can be determined 
in linear time, and the check for an edge will also be linear.
If the `else' condition is satisfied, then there is essentially nothing to do. 

Then we just run the algorithm
to find the descendants of $w$, or of $W$ in the `if' case and 
then take the first entry 
in each list of spouses to find bidirected paths to $v$.
Since we ignore any vertices that we have seen before (or stop at this 
point), this entire
procedure will be worst case linear in the number of vertices.
\end{proof}


\section{Examples} \label{sec:exm}

\begin{exm}
Consider the graph in Figure \ref{fig:exm0}(a), and consider the pair
$v,w$.  Note that, in the arid projection of this graph (see (b)), there is an 
edge $w \to v$; hence, by Theorem \ref{thm:main}, we know it must 
is possible to set up a distribution, Markov with respect to the 
canonical DAG $\overline\G$ in \ref{fig:exm0}(c), such that $X_v = X_w$ and 
these are independent of all the other observed variables.  

To do this, we simply apply Propositions \ref{prop:trees} and Corollary \ref{cor:trees3} 
to obtain the graph in (e), and then follow the rules stated in the 
proof of Proposition \ref{prop:dir}.  We select a value $X_w=x_w$, sample independent 
Bernoulli random variables for each of $H_1$, $H_2$ and $H_3$, and then let all 
other variables be xor-gates of their parents.  We therefore have:
\begin{align*}
X_a &= H_1 \oplus H_2
\qquad\qquad X_b = H_1 \oplus H_3\\
X_c &= H_3 \oplus X_b \oplus x_w = H_1 \oplus x_w\\
X_v &= H_2 \oplus X_a \oplus X_c,
\end{align*}
and note that by substituting in, we find: 
\begin{align*}
X_v &= H_2 \oplus (H_1 \oplus H_2) \oplus (H_1 \oplus x_w) = x_w.
\end{align*}
We also note that Lemma \ref{lem:sumtozero2} implies that 
all the other variables are independent.

\end{exm}

\begin{exm}
Consider the graph in Figure \ref{fig:exm1}(a); we can reduce this to 
the graph in (c) using Propositions \ref{prop:trees} and \ref{prop:complete}.  
Then if we set the latent variables to be $H_1$, $H_2$, $H_3$ (as indicated in 
Figure \ref{fig:exm1}(c)), we have:
\begin{align*}
X_a &= H_1 \oplus H_2 \qquad\qquad X_b = H_1 \oplus H_3\\
X_v &= X_a \oplus H_3 = H_1 \oplus H_2 \oplus H_3\\
\text{and} \quad 
X_w &= H_2 \oplus X_b = H_2 \oplus H_1 \oplus H_3;
\end{align*}
hence, we indeed have $X_v = X_w$.  In addition, we clearly have joint independence 
just by applying Lemma \ref{lem:sumtozero2}. 
 
Note that, if we had removed the $a \lrarr w$ edge when obtaining a spanning 
tree, we would still have 
been able to obtain the result.  In this case everything reduces to essentially
be the same as the IV model: we can remove $a$ as well as 
$c$, and we have $X_v = H_3$ and (if the new latent variable above $X_b$ 
and $X_w$ is $H_4$) 
we have $X_w = X_b \oplus H_4 = (H_3 \oplus H_4) \oplus H_4 = H_3$. 
By similar reasoning to before, the other variables $X_a, X_c$ are 
jointly independent of each other and $X_v$.
\end{exm}

\begin{rmk}
Note that the case where the arid graph induces a directed edge 
allows us to choose the value that the two variables take by intervening
on the parent variable.  However, in the case where a bidirected edge is
introduced it is just a function of some hidden variables that we
may not be able to control.
\end{rmk}

\section{Discussion} \label{sec:discuss}

This paper demonstrates that DAG models with hidden variables
may display some surprising and counterintuitive relationships
between the variables in them.  
In terms of causal model search, these results suggest that we 
may be better off restricting all ADMGs to only maximal arid graphs, 
since their 
counterparts which are not maximal and arid may exhibit spurious
dependencies that do not reflect the overall structure.  Note also
that arid graphs are precisely those that are globally identifiable
under the assumption of linear structural equations \citep{drton11global}.

The results also provide further justification for the nested 
Markov model, since they show that
the nested model precisely characterizes which pairs of 
variables can---and cannot---be made to be perfectly dependent. 
In other words, either two vertices possess a nested Markov constraint
between them, or they can be made to be exactly equal and independent
of all other variables.  It also gives fast methods 
for checking whether certain distributions are compatible with 
a given hidden variable model.  

\subsection{Extension to multiple variables}

An obvious extension to this work is to consider 
the possibility of simultaneously setting all $(X_a : a \in A)$ 
for some set $A \subseteq V$ to be equal, with joint independence of all 
other variables.  

The graph in Figure \ref{fig:counter_exm}(a) shows that the results 
for pairwise equality do not extend in a simple way to larger sets.  
Consider 
the triple $\{a,c,d\}$, for example.  In the MArG projection 
of $\G$ (see Figure \ref{fig:counter_exm}(b)) it clearly \emph{is} 
possible to set these three variables
to be equal, since $c,d$ are both children of $a$.  However, in
fact this is not possible using the canonical DAG for $\G$, 
which can be verified by applying the results of \citet{wolfe19inflation}
or \citet{fraser20combinatorial}
(Elie Wolfe, personal communication.)

\textbf{Acknowledgements.}  We thank Elie Wolfe for confirming that 
the graph in Figure \ref{fig:counter_exm}(a) cannot represent the 
three-way perfect dependence.

\bibliographystyle{abbrvnat}
\bibliography{refs.bib}

\appendix

\section{Probabilistic Result} \label{sec:proofs}

\begin{lem} \label{lem:sumtozero2}
Consider a collection of variables $X_1, \ldots, X_k$, where 
each $X_i$ is a sum modulo 2 of some subset of its predecessors, 
and a number of independent Bernoulli random variables with 
parameter $1/2$.  

The Bernoulli 
random variables are each added to exactly two of the $X_i$s, and 
the edges induced by this graph form a tree over $\{1,\ldots,k\}$.

Then any subset of the random variables $X_1, \ldots, X_{k-1}$ contains
mutually (and jointly) independent $\operatorname{Bernoulli}(1/2)$ 
variables, but 
\[
\sum_{i=1}^k X_i = 0 \; \operatorname{mod} 2.
\]
\end{lem}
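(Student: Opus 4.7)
The strategy is to reduce everything to a linear algebra computation modulo $2$. After recursively substituting out any predecessor $X_j$'s from the defining equations, each $X_i$ becomes a linear combination of the underlying Bernoullis $B_1, \ldots, B_{k-1}$ (one per edge of the tree), so we may write $X_i = \sum_l M_{i,l} B_l \pmod 2$ for a $k \times (k-1)$ matrix $M$ over $\{0,1\}$. By hypothesis every column of $M$ has exactly two $1$-entries, and the pairs of row-indices picked out in this way are the edges of a spanning tree on $\{1, \ldots, k\}$. The identity $\sum_{i=1}^k X_i \equiv 0 \pmod 2$ is then immediate, because the contribution of each $B_l$ to the overall sum is $B_l$ times its column-sum, which is $2 \equiv 0$.

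For the independence claim, fix any $S \subseteq \{1, \ldots, k-1\}$ and let $M_S$ be the $|S| \times (k-1)$ submatrix of $M$ with rows indexed by $S$. The key step is to show that the rows of $M_S$ are linearly independent over the two-element field. Suppose some nonempty $T \subseteq S$ satisfied $\sum_{i \in T} M_{i,\cdot} \equiv 0 \pmod 2$; then every edge of the tree would have an even number of endpoints in $T$, i.e.\ either zero or both. This makes $T$ a union of connected components of the tree. Since the tree is connected, $T$ must be either $\emptyset$ or all of $\{1, \ldots, k\}$, and the latter is excluded by $|T| \le |S| \le k-1$. So $T = \emptyset$ and the chosen rows are indeed independent.

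It follows that the linear map $(B_1, \ldots, B_{k-1}) \mapsto (X_i : i \in S)$ is surjective onto $\{0,1\}^{|S|}$, and because the $B_l$ are i.i.d.\ uniform on $\{0,1\}$ the image vector $(X_i : i \in S)$ is uniform on $\{0,1\}^{|S|}$ (every fibre of a surjective linear map over a finite field has the same size). Uniformity on the product space is precisely joint independence of $|S|$ Bernoulli$(1/2)$ variables, which is what we wanted. The only step with any real content is the combinatorial observation that a vertex subset of a tree meeting every edge in an even number of endpoints is a union of connected components; connectivity of the tree then collapses the possibilities to $\emptyset$ or the full vertex set, and everything else is a routine application of rank--nullity over the two-element field.
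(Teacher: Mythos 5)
Your reduction to linear algebra over the two-element field is a different route from the paper's proof (which inducts along the order $X_1,\dots,X_k$, peeling a leaf off the tree at each stage), but as written it contains a genuine gap: the claim that the \emph{post-substitution} coefficient matrix $M$ has exactly two $1$s per column. The hypothesis only controls where each Bernoulli variable is \emph{directly} added; recursively substituting out predecessors propagates it into further rows. In the paper's own `gadget' example, $H_2$ is added directly to $X_a$ and $X_c$, but since $X_d = X_a \oplus H_3$, after substitution $H_2$ also appears in the expansion of $X_d$, so its column of $M$ has weight three. Thus $M$ is not the incidence matrix of the tree, and both places where you invoke the weight-two property --- the column-sum computation for $\sum_i X_i$ and the step ``every edge of the tree has an even number of endpoints in $T$'' --- are arguments about the wrong matrix.

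The gap is not cosmetic, because your row-independence argument, if it were valid, would show that \emph{every} proper subset of $\{X_1,\dots,X_k\}$ is jointly independent, and that is false. Take $k=3$ with tree edges $\{1,2\}$ and $\{2,3\}$, and set $X_1=B_1$, $X_2=B_1\oplus B_2$, $X_3=X_1\oplus X_2\oplus B_2$: all hypotheses hold, yet $X_3\equiv 0$ (and incidentally $\sum_i X_i = B_2$, so the displayed identity also needs more than the stated hypotheses; the paper's own proof only derives that the full collection is dependent). Your strategy can be repaired: writing $X = AB + PX$ with $A$ the tree's incidence matrix and $P$ strictly lower triangular, one gets $M=(I+P)^{-1}A$ over the two-element field; your connected-components observation correctly identifies the left kernel of $A$ as the span of the all-ones vector, so the left kernel of $M$ is spanned by $\mathbf{1}^{\top}(I+P)$, whose $k$-th entry is $1$ because $X_k$ is a predecessor of nothing. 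Hence no nonzero left-kernel vector of $M$ is supported on $\{1,\dots,k-1\}$, and your rank/uniformity argument then applies verbatim to any subset of the first $k-1$ rows. But the restriction to the first $k-1$ variables has to enter the proof somewhere, and in your current write-up it never does.
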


\begin{proof}
We proceed by induction.  $X_1 = \sum_{j \in T_1} Z_j \mod 2$ for
some non-empty set $T_1$ of independent Bernoulli random variables, 
so $X_1$ itself must be a $\operatorname{Bernoulli}(1/2)$ r.v.  
Now, suppose that $X_1, \ldots, X_{\ell-1}$ are independent 
$\operatorname{Bernoulli}(1/2)$ r.v.s, and consider $X_\ell$ for $\ell < k$.  
Since the random variables form a tree, then
\begin{align*}
X_\ell = \sum_{i \in S_\ell} X_i + \sum_{i \in T_\ell} Z_i \mod 2
\end{align*}
is independent of $X_1, \ldots, X_{\ell-1}$ if and only if 
\begin{align*}
X_\ell^* := \sum_{i \in T_\ell} Z_i \mod 2
\end{align*}
is independent of $X_1^*, \ldots, X_{\ell-1}^*$.  Note, however, that 
since the $Z_i$s form a tree, there must be at least one of these 
variables that includes a $Z_i$ not seen elsewhere, so we know that 
this variable is independent of the rest.  When we remove this variable, we then 
have a sub-tree, so we can repeat the argument 
until we have shown that all the variables are independent.

When we get to $X_k$ however, note that we have an invertible 
transformation between the $X_i$s and the $Z_i$s, and since the
$Z_i$s form a tree over the $X_i$s, there is one fewer of them.
It follows that the full collection of $X_i$s is dependent.
\end{proof}

\newpage

\section{Graphs} \label{sec:graphs}

The first concept we will need is an extension to ADMGs in which we 
allow some vertices to be `fixed'.  
We define the \emph{siblings} of a vertex to be its neighbours via 
bidirected edges:
\begin{align*}
\sib_\G(v) &\equiv \{ w : v \lrarr w \text{ in }\G \}.
\end{align*}

A CADMG $\G(V,W)$ is an ADMG with a set of \emph{random} vertices $V$ and 
\emph{fixed} vertices $W$, with the property that 
$\sib_\G(w) \cup \pa_\G(w) = \emptyset$ for every $w \in W$.  
An example can be found in Figure \ref{fig:verma}(b); note
that we depict fixed vertices with rectangular nodes, and random
vertices with round nodes.  Random vertices in a CADMG correspond 
to random variables, as in standard graphical models, while fixed 
vertices correspond to variables that were fixed 
to a specific value by some operation, such as conditioning or
causal interventions.  The genealogical relations in Section \ref{sec:prelim} 
generalize in a straightforward way to CADMGs by ignoring the distinction between
$V$ and $W$; the only exception is that districts are only defined for 
random vertices, so that the districts in the graph partition only
$V$, rather than $V \cup W$.

\subsection{Latent Projection} \label{sec:lp}

The \emph{latent projection} of a CADMG $\G(V \dot\cup L, W)$ to 
another graph $\G'(V, W)$ is given by following the rules:
\begin{itemize}
\item if there is a directed path from $a \in V \cup W$ to 
$b \in V$, and any interior vertices are in $L$, then 
add $a \rightarrow b$;
\item if there is a path between $a,b\in V$ without any adjacent 
arrowheads, and any interior vertices are in $L$, that
starts and ends with an arrow at $a$ and $b$, then add $a \lrarr b$.
\end{itemize}

As an example, consider the ADMG in Figure \ref{fig:lp}(a), with 
variable $h$ designated as latent.  Then the projection of this is
given by the ADMG in Figure \ref{fig:lp}(b).

  \begin{figure}
  \begin{center}
 \large
  \begin{tikzpicture}[rv/.style={circle, draw, thick, minimum size=6mm, inner sep=0.5mm}, node distance=18mm, >=stealth]
  \pgfsetarrows{latex-latex};
  \begin{scope}
  \node[rv, red] (0) {$h$};
  \node[rv, above left of=0] (1) {$a$};
    \node[rv, above right of=0] (2) {$b$};
  \node[rv, below left of=0] (3) {$c$};
  \node[rv, below right of=0] (4) {$d$};
  \draw[<->, very thick, color=red] (3) to[bend right=-20] (0);
  \draw[<->, very thick, color=red] (0) to[bend right=-20] (4);
  \draw[->, very thick, color=blue] (1) -- (0);
  \draw[->, very thick, color=blue] (0) -- (3);
  \draw[->, very thick, color=blue] (0) -- (4);
  \draw[->, very thick, color=blue] (2) -- (4);
  \draw[<->, very thick, color=red] (2) -- (0);
    \node[below of=0, xshift=0mm, yshift=-3mm] {(a)};
    \end{scope}
      \begin{scope}[xshift=4cm, yshift=8mm]
  \node[rv] (1) {$a$};
  \node[rv, right of=1] (2) {$b$};
  \node[rv, below of=1] (3) {$c$};
  \node[rv, below of=2] (4) {$d$};
  \draw[<->, very thick, color=red] (3) -- (4);
  \draw[->, very thick, color=blue] (1) -- (3);
  \draw[->, very thick, color=blue] (1) -- (4);
  \draw[<->, very thick, color=red] (2) -- (3);
  \draw[->, very thick, color=blue] (2) -- (4);
  \draw[<->, very thick, color=red] (2) to[bend left] (4);
    \node[below of=3, yshift=6.5mm, xshift=9mm] {(b)};
    \end{scope}
  \end{tikzpicture}
 \normalsize
  \caption{(a) An ADMG in which $h$ is latent; (b) its latent projection over $\{a,b,c,d\}$.}
  \label{fig:lp}
  \end{center}
  \end{figure}
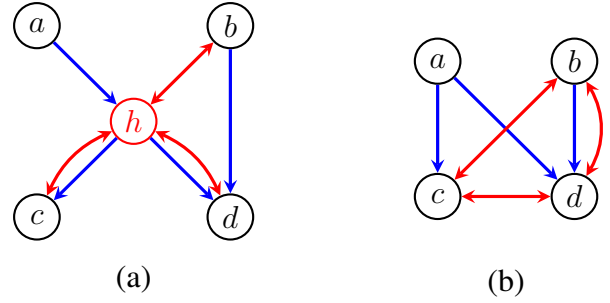
  
\subsection{Arid Projection} \label{sec:aridproj}

\begin{exm}
The \emph{maximal arid projection} of the ADMG $\G$ in Figure \ref{fig:aridproj}(a)
is given in \ref{fig:aridproj}(b).  In the graph (a) we have
$\langle d \rangle_\G = \{b,d\}$, so $\pa_\G(\langle d \rangle) = \{a,b,c\}$.
As a result, in (b) all these vertices are parents of $d$.
In addition, $\langle \{d,e\} \rangle_\G = \{b,c,d,e\}$ which is bidirected connected,
so we add the edge $d \lrarr e$ into (b).  All other adjacencies are as in (a).

 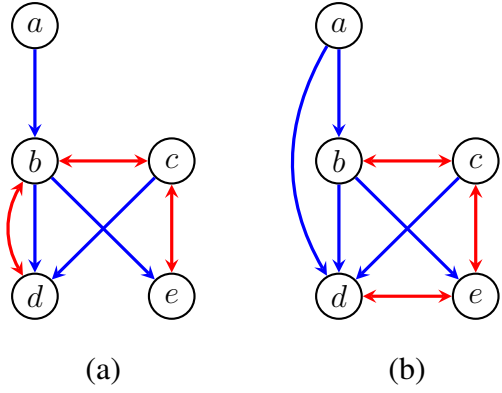
\begin{figure}
  \begin{center}
 \large
  \begin{tikzpicture}[rv/.style={circle, draw, thick, minimum size=6mm, inner sep=0.5mm}, node distance=18mm, >=stealth]
  \pgfsetarrows{latex-latex};
  \begin{scope}
  \node[rv] (0) {$a$};
  \node[rv, below of=0] (1) {$b$};
  \node[rv, right of=1] (2) {$c$};
  \node[rv, below of=1] (3) {$d$};
  \node[rv, below of=2] (4) {$e$};
  \draw[<->, very thick, color=red] (1) -- (2);
  \draw[<->, very thick, color=red] (2) -- (4);
  \draw[->, very thick, color=blue] (1) -- (3);
  \draw[->, very thick, color=blue] (1) -- (4);
  \draw[->, very thick, color=blue] (2) -- (3);
  \draw[->, very thick, color=blue] (0) -- (1);
  \draw[<->, very thick, color=red] (1) to[bend right] (3);
    \node[below of=3, xshift=9mm, yshift=8mm] {(a)};
    \end{scope}
      \begin{scope}[xshift=4cm]
  \node[rv] (0) {$a$};
  \node[rv, below of=0] (1) {$b$};
  \node[rv, right of=1] (2) {$c$};
  \node[rv, below of=1] (3) {$d$};
  \node[rv, below of=2] (4) {$e$};
  \draw[<->, very thick, color=red] (1) -- (2);
  \draw[<->, very thick, color=red] (2) -- (4);
  \draw[<->, very thick, color=red] (3) -- (4);
  \draw[->, very thick, color=blue] (1) -- (3);
  \draw[->, very thick, color=blue] (1) -- (4);
  \draw[->, very thick, color=blue] (2) -- (3);
  \draw[->, very thick, color=blue] (0) -- (1);
  \draw[->, very thick, color=blue] (0) to[bend right] (3);
    \node[below of=3, xshift=9mm, yshift=8mm] {(b)};
    \end{scope}
  \end{tikzpicture}
 \normalsize
  \caption{(a) An ADMG $\G$ which is neither maximal nor arid; (b) its maximal arid projection.}
  \label{fig:aridproj}
  \end{center}
  \end{figure}
\end{exm}  

\section{The Nested Markov Model} \label{sec:nestedmodel}


\subsection{Fixing}

A vertex $r \in V$ is said to be \emph{fixable} in a CADMG $\G(V,W)$ if 
$\dis_\G(r) \cap \dec_\G(r) = \emptyset$.  For instance, the vertices $a$,
$c$ and $d$ are all fixable in the graph in Figure \ref{fig:verma}(a), 
but $b$ is not because $d$ is both its descendant and its sibling.  

For any $v \in V$, such that $\ch_\G(v) = \emptyset$,
the \emph{Markov blanket} of $v$ in a CADMG $\G$ is
defined as
\begin{align*}
\mbl_\G(v) \equiv (\dis_\G(v) \cup \pa_\G(\dis_\G(v))) \setminus \{v\};
\end{align*}
that is, the set of vertices that are connected to $v$ by paths
with an arrow at $v$ and two arrowheads at each internal vertex. 
We can generalize this definition to any vertex that is childless 
within its own district.

Given a CADMG $\G(V,W)$, and a fixable $r \in V$, the fixing operation 
$\phi_r(\G)$ yields a new CADMG $\widetilde\G(V\setminus\{r\},W\cup\{r\})$ 
obtained from $\G(V,W)$ by removing all edges of the form $\to r$ and 
$\lrarr r$, and keeping all other edges. 
Given a kernel $q_V(x_V \cmid x_W)$ associated with a CADMG $\G(V,W)$, 
and a fixable $r \in V$, the fixing operation
$\phi_r(q_V; \G)$ yields a new kernel
\[
\tilde{q}_{V\setminus\{r\}}(x_{V\setminus\{r\}} \cmid x_{W}, x_r) \equiv \frac{q_V(x_V \cmid x_W)}{q_V(x_r \cmid x_{\mbl_\G(r)})}.
\]
A result in \cite{richardson:17} allows us to unambiguously define 
\begin{align*}
\phi_{R}(\G) &\equiv \phi_{r_k}( \ldots \phi_{r_2}(\phi_{r_1}(\G)) \ldots ),
\end{align*}
and similarly the kernel $\phi_R(p; \G)$ for distributions that are nested Markov
with respect to $\G$ (defined below).  Consequently, we just use sets to index 
fixings from now on.


If a fixing sequence exists for a set $R \subseteq V$ in $\G(V,W)$, 
we say $V \setminus R$ is a \emph{reachable set}.  
Such a set is called \emph{intrinsic} if the vertices in $V\setminus R$ are
bidirected-connected (so that $\phi_{V \setminus R}(\G)$ has only a single
district); this definition is equivalent to the definition in the main paper.  
We denote the collections of reachable and intrinsic sets in $\G$ 
respectively by ${\cal R}(\G)$ and ${\cal I}(\G)$.




For a CADMG $\G(V,W)$,
a (reachable) subset ${ C} \subseteq { V}$
is called a \emph{reachable closure} for ${ S} \subseteq { C}$
if the set of fixable vertices in $\phi_{V \setminus C}(\G)$ is a subset of ${S}$.
Every set ${S}$ in $\G$ has a unique reachable closure, which we denote
$\langle S \rangle_\G$ \citep{shpitser:18}.  Note that this set is generally 
a subset of what we earlier called the `closure'.

\subsection{Nested Markov Model}

We are now ready to define the nested Markov model $\M_n(\G)$.
Given an ADMG $\G$, we say that a distribution $p$ 
obeys the \emph{nested Markov property} with respect to $\G$ if 
for any reachable set $R$, we have that
$\phi_{V \setminus R}(p; \G)$ factorizes into kernels as
\begin{align*}
\phi_{V \setminus R}(p; \G) &= \prod_{D \in \mathcal{D}(\phi_{V \setminus R}(\G))} \phi_{V \setminus D}(p; \G).
\end{align*}
In other words, for any reachable graph, the associated
kernel factorizes into a product of the districts in that 
graph conditional on the parents of those districts. 

Note that this also means that $\phi_{V \setminus R}(p; \G)$ 
will be Markov with respect to the CADMG $\phi_{V \setminus R}(\G)$
for each reachable set $R$; see \citet{richardson03markov} for more
details on this.

\begin{exm}
Consider the ADMG in Figure \ref{fig:verma}(a).  The 
vertices $a$, $c$ and $d$ all satisfy the condition 
of being fixable, but $b$ does not since 
$d$ is both a descendant of, and in the same district as,
$b$.  The CADMG $\G(\{b,d\},\{a,c\})$ obtained after fixing $a$
and $c$ is shown in Figure \ref{fig:verma}(b).  Notice 
that fixing $c$ removes the edge $b \to c$, but
that the edge $c \to d$ is preserved. 
Applying m-separation to the graph shown in Figure \ref{fig:verma}(b),
yields
\begin{align*}
&X_d \indep X_a \mid X_c \quad [\phi_{ac}(p(x_{abcd});\G)].
\end{align*}
In addition, one can see easily that if an edge 
$a \to d$ had been present in the original 
graph, 
then we would not have obtained this
m-separation.

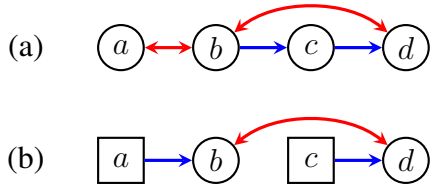
\begin{figure}
\begin{center}
\begin{tikzpicture}
[rv/.style={circle, draw, thick, minimum size=6mm, inner sep=0.5mm}, 
fv/.style={rectangle, draw, thick, minimum size=6mm, inner sep=0.5mm}, 
node distance=12.5mm, >=stealth]
\pgfsetarrows{latex-latex};
\large
\begin{scope}
 \node[rv] (1) {$a$};
 \node[rv, right of=1] (2) {$b$};
 \node[rv, right of=2] (3) {$c$};
 \node[rv, right of=3] (4) {$d$};
 \draw[<->, very thick, color=red] (1) -- (2);
 \draw[->, very thick, color=blue] (2) -- (3);
 \draw[->, very thick, color=blue] (3) -- (4);
\draw[<->, very thick, color=red] (2.40) .. controls +(40:.7) and +(140:.7) .. (4.140);
\node[left of=1, yshift=0mm, xshift=0mm] {(a)};
\end{scope}
\begin{scope}[yshift=-1.5cm]
 \node[fv] (1) {$a$};
 \node[rv, right of=1] (2) {$b$};
 \node[fv, right of=2] (3) {$c$};
 \node[rv, right of=3] (4) {$d$};
 \draw[->, very thick, color=blue] (1) -- (2);
 \draw[->, very thick, color=blue] (3) -- (4);
\draw[<->, very thick, color=red] (2.40) .. controls +(40:.7) and +(140:.7) .. (4.140);
\node[left of=1, yshift=0mm, xshift=0mm] {(b)};
\end{scope}
\normalsize
\end{tikzpicture}
\caption{(a) An ADMG ${\cal G}$ that is not ancestral; 
(b) a CADMG obtained from ${\cal G}$ in (a) by fixing $a$ and $c$.}
\label{fig:verma}
\end{center}
\end{figure}

\end{exm}

\subsection{Densely Connected Vertices}

Here we give a couple of slightly more detailed examples
than in the main text.

\begin{exm}
The vertices $a$ and $c$ in Figure \ref{fig:iv}(b) are `densely
connected', because they cannot be separated by any combination of 
conditioning or fixing, except by fixing $c$ (which just amounts to marginalizing 
it from the graph). 
Separately, for `gadget' graph in Figure \ref{fig:gadget}(b) the vertices
$c$ and $d$ are also `densely connected'.  Naturally, any pair of 
vertices joined by an edge is also densely connected.
\end{exm}

\section{Algorithms} \label{sec:algo}

\subsection{Spanning Tree}

Given a set $C$ and its subset of 
childless nodes $B$ (in our case this will
be either $\{v\}$ or $\{v,w\}$), pick a topological 
order on the vertices that places all elements of $B$ 
at the end.  Then, the last vertex before $B$ must be 
a parent of some element of $B$; pick the largest such 
element under the topological order.  

We then move backwards in the topological order, and 
each time a vertex has more than one child, we join it
to the vertex which has the shortest path to an element 
of $B$; if there is a tie, then we pick the largest 
element under the topological order.  This ensures that 
each vertex is joined to $B$ by the shortest possible directed 
path.

\subsection{Difficult Graphs} \label{sec:diff}

Consider the graph shown in Figure \ref{fig:comp}.  This 
can clearly be reduced to the graph $w \to v$, 
but the application of Proposition \ref{prop:complete} is
computationally difficult.  Note that no subset will work
apart from $\{z_1, \ldots, z_k\}$, and there are $3^k - 1$ 
possible sets to choose.

Algorithm 1 (with complexity proven to be $O(|V|)$) can be 
applied instead and will immediately return the graph $w \to v$.

\begin{figure}
\begin{center}
\begin{tikzpicture}
[rv/.style={circle, draw, thick, minimum size=6mm, inner sep=0.5mm}, node distance=12.5mm, >=stealth]
\pgfsetarrows{latex-latex};
\large
\begin{scope}
 \node[rv] (1) {$y_{1}$};
 \node[rv, below of=1] (1a) {$z_{1}$};
 \node[rv, right of=1] (2) {$y_{2}$};
 \node[rv, below of=2] (2a) {$z_{2}$};
 \node[rv, right of=2] (3) {$y_{3}$};
 \node[rv, below of=3] (3a) {$z_{3}$};
\node[right of=3, xshift=-3mm] (d1) {$\dots$};
\node[below of=d1] (d2) {$\dots$};
 \node[rv, right of=d1, xshift=-3mm] (k) {$y_{k}$};
 \node[rv, below of=k] (ka) {$z_{k}$};
 \node[rv, below of=3a] (v) {$v$};
  \node[rv, below right of=v] (w) {$w$};
   \draw[->, very thick, color=blue] (1) -- (1a);
 \draw[->, very thick, color=blue] (1a) -- (v);
 \draw[->, very thick, color=blue] (2) -- (2a);
 \draw[->, very thick, color=blue] (2a) -- (v);
 \draw[->, very thick, color=blue] (3) -- (3a);
 \draw[->, very thick, color=blue] (3a) -- (v);
 \draw[->, very thick, color=blue] (k) -- (ka);
 \draw[->, very thick, color=blue] (ka) -- (v);
 \draw[->, very thick, color=blue] (w) -- (v);
 \draw[<->, very thick, color=red] (1) -- (2);
 \draw[<->, very thick, color=red] (3) -- (2);
 \draw[->, very thick, color=red] (d1) -- (3);
 \draw[->, very thick, color=red] (d1) -- (k);
 \draw[<->, very thick, color=red] (1a) -- (2a);
 \draw[<->, very thick, color=red] (3a) -- (2a);
 \draw[->, very thick, color=red] (d2) -- (3a);
 \draw[->, very thick, color=red] (d2) -- (ka);
  \draw[<->, very thick, color=red] (k) to[bend left] (ka);
 \draw[<->, very thick, color=red] (1.215) .. controls +(215:1.5) and +(180:3) .. (v.180);
\end{scope}
\normalsize
\end{tikzpicture}
\caption{An ADMG for which a search for a set to satisfy Proposition 
\ref{prop:complete} is computationally difficult.}
\label{fig:comp}
\end{center}
\end{figure}
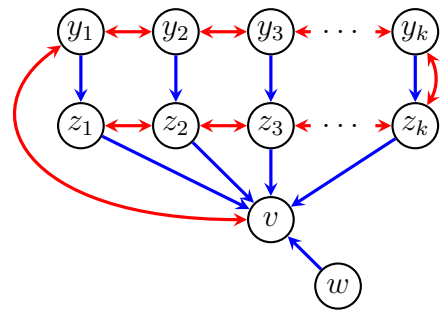

\end{document}